\newcommand{\matindex}[1]{\mbox{\scriptsize#1}} 
\newtheorem{theorem}{Theorem}[section]
\newtheorem{theorem A}{Theorem A}
\newtheorem{theorem B}{Theorem B}
\newtheorem{lemma}[theorem]{Lemma}
\newtheorem{definition}[theorem]{Definition}
\newtheorem{proposition}[theorem]{Proposition}
\begin{document}
\address{Department of Mathematics, Ferdowsi University of Mashhad, Mashhad, Iran}
\title{The Bogomolov multiplier of groups of order $p^7$ and exponent $p$}
\author[Z. Araghi Rostami]{Zeinab Araghi Rostami}
\author[M. Parvizi]{Mohsen Parvizi}
\author[P. Niroomand]{Peyman Niroomand}
\address{Department of Pure Mathematics\\
Ferdowsi University of Mashhad, Mashhad, Iran}
\email{araghirostami@gmail.com, zeinabaraghirostami@stu.um.ac.ir}
\address{Department of Pure Mathematics\\
Ferdowsi University of Mashhad, Mashhad, Iran}
\email{parvizi@um.ac.ir}
\address{School of Mathematics and Computer Science\\
Damghan University, Damghan, Iran}
\email{niroomand@du.ac.ir, p$\_$niroomand@yahoo.com}
\address{Department of Mathematics, Ferdowsi University of Mashhad, Mashhad, Iran}
\keywords{Commutativity-Preserving exterior product, ${\tilde{B_0}}$-pairing, Curly exterior square, Bogomolov multiplier.}
\maketitle
\begin{abstract}
In this paper, the Bogomolov multiplier of $p$-groups of order $p^7$ ($p>2$) and exponent $p$ is given.

\end{abstract}
\section{\bf{Introduction}}
Let $K$ be a field, $G$ be a finite group, and $V$ be a faithful representation of $G$ over $K$. Then there is a natural action of $G$ upon the field of rational functions $K(V)$. Noether's problem which is introduced by Emmy Noether in \cite{25'}, is one of the important problems of invariant theory. It asks whether the field of $G$-invariant functions ${K(V)}^G$ is rational over $K$, or equivalently whether there exist independent variables $x_1,\ldots,x_r$ such that ${K(V)}^{G}(x_1,\ldots,x_r)$ becomes a pure transcendental extension of $K$. This problem has a close connection to Luroth's problem \cite{27'} and the inverse Galois problem \cite{28,28'}. Saltman in \cite{28} found examples of groups of order $p^9$ with a negative answer to Noether's problem, even when taking $K=\Bbb{C}$. His main method was the application of the unramified cohomology group ${H_{nr}^{2}}({\Bbb{C}(V)}^{G},{\Bbb{Q}}/{\Bbb{Z}})$ as an obstruction. This invariant had been used by Artin and Mumford \cite{1'}. They constructed unirational varieties over $K$ that were not rational. Bogomolov in \cite{2} proved that ${H_{nr}^{2}}({\Bbb{C}(V)}^{G},{\Bbb{Q}}/{\Bbb{Z}})$ is canonically isomorphic to
$$B_0(G)=\bigcap_{A\leq G}\  \ker \{{{res}_{A}^{G}}: H^2(G,{\Bbb{Q}}/{\Bbb{Z}})\longrightarrow H^2(A,{\Bbb{Q}}/{\Bbb{Z}})\},$$
where ${res}_{A}^{G}$ is the usual cohomological restriction maps and $A$ is an abelian subgroup of $G$. The group $B_0(G)$ is a subgroup of the Schur multiplier $\mathcal{M}(G)$ and Kunyavskii in \cite{16} named it the \emph{Bogomolov Multiplier} of $G$. Thus non triviality of the Bogomolov multiplier leads to counter-examples to Noether's problem. But it's not always easy to calculate the Bogomolov multiplier of groups.  Moravec in \cite{22} introduced an equivalent definition of the Bogomolov multiplier that has an important role in studying the Noether's problem. In this sense, he showed that if $G$ is a finite group, then $B_0(G)$ is naturally isomorphic to $\text{Hom} (\tilde{B_0}(G),{\Bbb{Q}}/{\Bbb{Z}})$, where the group $\tilde{B_0}(G)$ can be described as a section of the non abelian exterior square of a group $G$. Also, he proved that $\tilde{B_0}(G)=\mathcal{M}(G)/\mathcal{M}_0(G)$, in which the Schur multiplier $\mathcal{M}(G)$ or the same $H^2(G,{\Bbb{Q}}/{\Bbb{Z}})$ interpreted as the kernel of the commutator homomorphism $G\wedge G \rightarrow [G,G]$ given by $x\wedge y \rightarrow  [x,y]$, and $\mathcal{M}_0(G)$ is a subgroup of $\mathcal{M}(G)$ defined as $\mathcal{M}_0(G)=<x\wedge y \ | \  [x,y]=0 , \  x,y\in G>$. Therefore in the class of finite groups, $\tilde{B_0}(G)$ is naturally isomorphic to $B_0(G)$. Similar to the Schur multiplier, the Bogomolov multiplier can be explained as a measure of the extent to which relations among commutators in a group fail to be consequences of universal relation. Furthermore, Moravec's method relates the Bogomolov multiplier to the concept of commuting probability of a group and shows that the Bogomolov multiplier plays an important role in commutativity preserving central extensions of groups, that are famous cases in $K$-theory, (see \cite{12,22} for more information). There are some papers to compute this multiplier for some groups, for example Chu and Kang in \cite{4}, proved that the Bogomolov multiplier of all groups of order at most $p^4$, is trivial. Bogomolov in \cite{2} claimed that if $G$ is a group of order $p^5$, then ${\tilde{B_0}(G)}$ is trivial. This claim in \cite{6} was confirmed by Chu et al. for $p=2$.  Later, Moravec proved that this claim is false for $p=3$, and he showed that there are precisely three groups of order $3^5$ with the non trivial Bogomolov multipliers. Bogomolov in \cite{2} found some examples of groups of order $p^6$ with non trivial Bogomolov multiplier. Recently, Yin Chen and Rui Ma in \cite{3}, computed the Bogomolov multiplier for some $p$-groups of order $p^6$. On the other hand, in \cite{10,23}, Hoshi et al. and Moravec used different methods to prove that if $p>3$ is  prime and $G$ is a group of order $p^5$, then the Bogomolov multiplier is non trivial if and only if $G$ belongs to the isoclinism family ${\Phi}_{10}$. Also, Moravec in \cite{24} showed that for two isoclinic $p$-groups $G_1$ and $G_2$, $\tilde{B_0}(G_1)$ is isomorphic to $\tilde{B_0}(G_2)$. Thus if $\Phi$ denotes an isoclinism family of finite $p$-groups, so the Bogomolov multiplier of $\Phi$ is well defined and denotes by $\tilde{B_0}(\Phi)$. Furthermore, in \cite{6} Chue et al. classified all non abelian groups of order $2^5$ and $2^6$ with non trivial Bogomolov multipliers. Michilov in \cite{19}, computed $\tilde{B_0}(G)$ for some $p$-groups of nilpotency class $2$ that do not have the ABC (Abelian-By-Cyclic) property. Also, the Bogomolov multiplier was computed for groups of order $2^7$ by Jezernik and Moravec in \cite{12}. The classification of $p$-groups of order $p^7$ $(p>2)$ and exponent $p$, based on isomorphism, was conducted by Wilkinson in \cite{31}. In this paper, we determine which one of those groups has trivial Bogomolov multiplier.
\section{\bf{Preliminaries and notations}}
For the convenience of the reader, we give some known results and notations that will be used through the paper.
\\
Let $G$ be a group. The exterior square of $G$, $G\wedge G$, is defined to be the group generated by the symbols $x\wedge y$, subject to the following relations:
\renewcommand {\labelenumi}{(\roman{enumi})} 
\begin{enumerate}
\item{$xy\wedge z=(x^{y}\wedge z^{y})(y\wedge z),$}
\item{$x\wedge yz=({x}\wedge {z})(x^z\wedge y^z),$}
\item{$x\wedge x=1,$}
\end{enumerate}
for all $x,y,z\in G$.
\\
Let $L$ be a group. A function $\Phi: G\times G \to L$ is called an exterior pairing, if 
\renewcommand {\labelenumi}{(\roman{enumi})} 
\begin{enumerate}
\item{$\Phi(xy , z)=\Phi(x^{y} , z^{y})\Phi(y , z),$}
\item{$\Phi(x , yz)=\Phi({x} , {z})\Phi(x^z , y^z),$}
\item{$\Phi(x , x)=1,$}
\end{enumerate}
for all $x,y,z \in G$.
\\
It is easy to see that, an exterior pairing $\Phi$ determines a unique homomorphism of groups $\Phi ^{*} : G\wedge G \to L$ given by $\Phi ^{*} (x\wedge y)=\Phi(x,y)$ for all $x,y\in G$. An example of an exterior pairing is the commutator map $\kappa: G\times G \to [G,G]$ given by $(x,y) \longrightarrow [x,y]$ for all $x,y \in G$. It induces a homomorphism ${\tilde{\kappa}}: G\wedge G \to [G,G]$, given by $x\wedge y \longmapsto [x,y]$ for all $x,y \in G$. Miller in \cite{20} showed that the kernel of $\tilde{\kappa}$ is isomorphic to the Schur multiplier of $G$, that is
$$\mathcal{M}(G) = \{ \prod _{i=0}^{n} (x_i \wedge y_i)^{\varepsilon_i} \in G \wedge G  \  \vert \   {\varepsilon}_i = \pm 1 , \prod _{i=0}^{n} [x_i , y_i]^{\varepsilon_i} = 1 \ , \ i\in {\Bbb{N}\cup \{0\}}\}.$$
Moreover, $\mathcal{M}_0(G)$ is the subgroup of $\mathcal{M}(G)$ generated by all elements $x \wedge y$, such that $x,y \in G$ commute, that is
$$\mathcal{M}_0(G) = \{ \prod _{i=0}^{n} (x_i \wedge y_i)^{\varepsilon_i} \in G \wedge G  \  \vert \   {\varepsilon}_i = \pm 1 , [x_i , y_i] = 1  \ , \ i\in {\Bbb{N}\cup \{0\}}\}.$$
Moravec in \cite{22} proved that for a finite group $G$, ${B_0}(G)\cong \tilde{B_0}(G)={\mathcal{M}(G)}/{\mathcal{M}_0(G)}$. \\
Blyth and Morse in \cite{1}, introduced a suitable way for obtaining the exterior square of $G$. The advantage of this description is the ability of using the full power of the commutator calculus instead of computing with elements of $G\wedge G$.
\begin{definition}\cite{23}\label{d2.1}
Let $G$ be a group and let $G^{\varphi}$ be an isomorphic copy of $G$ via the mapping $\varphi: x\to x^{\varphi}$, for all $x\in G$. We define the group $\tau(G)$ as follows:
$$\tau(G) = <G , G^{\varphi} \ |\  [x,y^{\varphi}]^{z}=[x^{z},(y^z)^{\varphi}]=[x,y^{\varphi}]^{z^{\varphi}} , [x,x^{\varphi}]=1 \ ,\  x,y,z\in G>.$$
\end{definition}
Note that $G$ and $G^{\varphi}$ may be embedded into $\tau(G)$. So the groups $G$ and $G^{\varphi}$ can be viewed as subgroups of $\tau(G)$. 
Let $[G,G^{\varphi}]=<[x,y^{\varphi}] \ |\  x,y\in G>$. Now we give some properties of $\tau(G)$ and $[G,G^{\varphi}]$ that will be used frequently in our calculations.
\begin{proposition}\cite[Proposition 16]{1}\label{p2.2}
Let $G$ be a group. Then the map $\Phi: G\wedge G \to [G,G^{\varphi}]$ given by $(x\wedge y) \to [x,y^{\varphi}]$ for all $x,y\in G$, is isomorphism.
\end{proposition}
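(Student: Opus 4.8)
The plan is to produce the map $\Phi$ from the universal property of $G\wedge G$ recalled just above the statement, and then to invert it. First I would define $\phi\colon G\times G\to [G,G^{\varphi}]$ by $\phi(x,y)=[x,y^{\varphi}]$ and verify that $\phi$ is an exterior pairing in the sense of the three axioms (i)--(iii). Axiom (iii) is immediate, since $[x,x^{\varphi}]=1$ is one of the defining relations of $\tau(G)$. For axioms (i) and (ii) I would expand $[xy,z^{\varphi}]$ and $[x,(yz)^{\varphi}]$ using the ordinary commutator identities $[ab,c]=[a,c]^{b}[b,c]$ and $[a,bc]=[a,c]\,[a,b]^{c}$ (noting $(yz)^{\varphi}=y^{\varphi}z^{\varphi}$ since $\varphi$ is a homomorphism), and then rewrite the conjugations by means of the $\tau(G)$-relations $[x,y^{\varphi}]^{z}=[x^{z},(y^{z})^{\varphi}]=[x,y^{\varphi}]^{z^{\varphi}}$. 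Matching the resulting factors term by term reproduces exactly the pairing identities, so by the universal property $\phi$ determines a unique homomorphism $\Phi\colon G\wedge G\to[G,G^{\varphi}]$ with $\Phi(x\wedge y)=[x,y^{\varphi}]$. Surjectivity is then automatic, since $[G,G^{\varphi}]$ is generated by the commutators $[x,y^{\varphi}]=\Phi(x\wedge y)$.

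The substance of the argument is injectivity, which I would obtain by constructing an explicit inverse $\Psi\colon[G,G^{\varphi}]\to G\wedge G$ sending $[x,y^{\varphi}]\mapsto x\wedge y$. To see that $\Psi$ is well defined I would show that every relation among the generators $[x,y^{\varphi}]$ that is forced inside $\tau(G)$ is already a consequence of the exterior-square relations (i)--(iii). The cleanest route is to build an auxiliary homomorphism out of $\tau(G)$ whose restriction to $[G,G^{\varphi}]$ realizes $\Psi$: one lets $G\times G$ act on $G\wedge G$ compatibly with the exterior-square relations, forms the corresponding semidirect product, and checks that the defining relations of $\tau(G)$ are satisfied by the chosen images of the copies $G$ and $G^{\varphi}$. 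Restricting this homomorphism to $[G,G^{\varphi}]$ yields $\Psi$, and comparing on generators gives $\Psi\Phi=\mathrm{id}_{G\wedge G}$ and $\Phi\Psi=\mathrm{id}_{[G,G^{\varphi}]}$, so $\Phi$ is an isomorphism.

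The main obstacle is precisely this well-definedness of $\Psi$: one must verify that $G\wedge G$ carries no relations beyond those visible in $[G,G^{\varphi}]$, i.e. that the two presentations genuinely coincide rather than one being a proper quotient of the other. The delicate point is the bookkeeping around conjugation, keeping the three-fold identification $[x,y^{\varphi}]^{z}=[x^{z},(y^{z})^{\varphi}]=[x,y^{\varphi}]^{z^{\varphi}}$ consistent throughout and ensuring that no ordering or exponent discrepancies creep in when the $G$- and $G^{\varphi}$-actions are transported onto $G\wedge G$. Once that transport is shown to respect relations (i)--(iii), the isomorphism follows.
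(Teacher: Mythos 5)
Your first half is correct and is the standard argument: one checks that $(x,y)\mapsto[x,y^{\varphi}]$ satisfies the three exterior-pairing axioms using $[ab,c]=[a,c]^{b}[b,c]$, $[a,bc]=[a,c][a,b]^{c}$ and the defining relations of $\tau(G)$, so the universal property yields a homomorphism $\Phi$ which is surjective because the $[x,y^{\varphi}]$ generate $[G,G^{\varphi}]$. (The paper itself gives no proof of this proposition; it cites Blyth--Morse, so the comparison is with their argument and with Rocco's tensor-square analogue on which it rests.) The genuine gap is in your injectivity step, and it is not repairable by careful bookkeeping: the target you propose cannot do the job. First, the required action does not exist: the natural actions of the two factors of $G\times G$ on $G\wedge G$ are the diagonal ones, and these fail to commute with each other (their commutator is conjugation by $g\wedge h$), so there is no semidirect product $(G\wedge G)\rtimes(G\times G)$ compatible with what you need. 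Worse, no homomorphism $\tilde{\Psi}\colon\tau(G)\to(G\wedge G)\rtimes(G\times G)$, for any action whatsoever and any choice of images of $G$ and $G^{\varphi}$, can satisfy $\tilde{\Psi}([x,y^{\varphi}])=x\wedge y$. Take $G$ elementary abelian of order $p^{2}$ with generators $a,b$. Then $G\wedge G=\mathcal{M}(G)\cong{\Bbb{Z}}_{p}$, generated by $a\wedge b\neq 1$ (this is classical and independent of the proposition being proved). Since $G\times G$ is a $p$-group and $\mathrm{Aut}({\Bbb{Z}}_{p})$ has order $p-1$, every action of $G\times G$ on $G\wedge G$ is trivial, so every group $(G\wedge G)\rtimes(G\times G)$ is abelian; but $[x,y^{\varphi}]$ is a commutator of $\tau(G)$, so any homomorphism into an abelian group kills it, forcing $\tilde{\Psi}([a,b^{\varphi}])=1\neq a\wedge b$. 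The structural point is that the extension $1\to[G,G^{\varphi}]\to\tau(G)\to G\times G\to 1$ is non-split in general (for this $G$ it is a nonabelian central extension of an elementary abelian group, and a split central extension would be abelian), as one also sees from the identity $(y^{\varphi})^{x}=[x^{y^{-1}},y^{\varphi}]^{-1}\,y^{\varphi}$: conjugation by $G$ does not normalize $G^{\varphi}$ but creates wedge components.

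The correct implementation of your (otherwise sound) strategy replaces the split extension by a twisted, iterated construction. Following Rocco's proof of the tensor-square statement for $\nu(G)$, one first forms $(G\wedge G)\rtimes G$ with the diagonal action, and then lets a second copy of $G$ act on this \emph{whole} group by a formula whose $G\wedge G$-component is nontrivial, dictated by the identity displayed above; verifying that this is an action and that the resulting group $((G\wedge G)\rtimes G)\rtimes G$ satisfies the defining relations of $\tau(G)$ is the real content, and it is exactly this twisting that encodes the non-splitness your construction misses. Restricting the induced map $\tau(G)\to((G\wedge G)\rtimes G)\rtimes G$ to $[G,G^{\varphi}]$ then gives the desired left inverse to $\Phi$. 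Alternatively, and closer to what the cited source actually does, one can quote Rocco's theorem that $[G,G^{\varphi}]\cong G\otimes G$ inside $\nu(G)$ and obtain the exterior-square statement by factoring both sides by the corresponding subgroups $\langle x\otimes x \mid x\in G\rangle$ and $\langle [x,x^{\varphi}] \mid x\in G\rangle^{\nu(G)}$, which match under Rocco's isomorphism.
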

Let ${\kappa}^{*}=\tilde{\kappa}{\Phi}^{-1}$ be the composite map from $[G,G^{\varphi}]$ to $[G,G]$, $\mathcal{M}^{*}(G)=\ker{{\kappa}^{*}}$ and ${\mathcal{M}_0}^{*}(G)=\Phi(\mathcal{M}_0(G))$. More precisely, we have
$$\mathcal{M}^{*}(G) = \{ \prod _{i=0}^{n} [x_i , {y_i}^{\varphi}]^{\varepsilon_i} \in [G , G^{\varphi}] \   \vert \   {\varepsilon}_i = \pm 1 , \prod _{i=0}^{n} [x_i , y_i]^{\varepsilon_i} = 1   \ , \ i\in {\Bbb{N}\cup \{0\}}\}$$
and
$${\mathcal{M}_0}^{*}(G) = \{ \prod _{i=0}^{n} [x_i , {y_i}^{\varphi}]^{\varepsilon_i} \in [G , G^{\varphi}] \   \vert \   {\varepsilon}_i = \pm 1 ,  [x_i , y_i]= 1   \ , \ i\in {\Bbb{N}\cup \{0\}}\}.$$
It's an immediate consequence that in finite case, ${B_0}(G)$ is isomorphic to $\dfrac{\mathcal{M}^{*}(G)}{{\mathcal{M}_0}^{*}(G)}=\tilde{B_0}(G)$ (see \cite{22}). So in order to show that the triviality of $\tilde{B_0}(G)=0$, it suffices to show that $\mathcal{M}^{*}(G) \subseteq {\mathcal{M}_0}^{*}(G)$.
\\
In the following, we introduce farther properties of $\tau(G)$ and $[G,G^{\varphi}]$ that will be useful.
\begin{lemma}\cite[Lemmas 9, 10, 11]{1}\label{l2.3} 
Let $G$ be a group. The following statements, for all $x,y,z,v,w\in G$ and all $n,m\in \Bbb{N}$, hold:
\end{lemma}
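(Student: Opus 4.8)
The plan is to treat each asserted identity as a statement inside the single group $\tau(G)$, in which both $G$ and $G^{\varphi}$ sit as subgroups, and to derive everything from the three defining relations of Definition \ref{d2.1} together with the universal commutator laws $[ab,c]=[a,c]^{b}[b,c]$ and $[a,bc]=[a,c][a,b]^{c}$. The crucial first observation I would record is that the defining relation $[x,y^{\varphi}]^{z}=[x^{z},(y^{z})^{\varphi}]=[x,y^{\varphi}]^{z^{\varphi}}$ says two things simultaneously: conjugating a mixed commutator by an element of $G$ and by its $\varphi$-image give the \emph{same} result, and that result is itself a mixed commutator of the expected shape. I would isolate this as the key normalization tool, since it is precisely what lets one pass freely between conjugation inside $G$ and inside $G^{\varphi}$ throughout the remaining computations, and it also yields $[x,y]^{\varphi}=[x^{\varphi},y^{\varphi}]$ because $\varphi$ is an isomorphism.

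With that in hand I would first establish the bilinear expansion formulas, which are the analogues in $[G,G^{\varphi}]$ of the exterior-pairing relations (i) and (ii) for $G\wedge G$, by expanding $[xy,z^{\varphi}]$ and $[x,(yz)^{\varphi}]$ directly with the universal commutator laws and then rewriting each resulting conjugation of a mixed commutator using the normalization tool above. Next I would prove the transfer identities that let a commutator cross the $\varphi$-barrier, namely relations of the form $[[x,y],z^{\varphi}]=[[x,y^{\varphi}],z]$ and their variants; these follow by applying the defining relation to the inner commutator and then collapsing the nested conjugations. The relation $[x,x^{\varphi}]=1$ I would use to extract the antisymmetry $[x,y^{\varphi}][y,x^{\varphi}]=1$, exactly as $x\wedge x=1$ forces $x\wedge y=(y\wedge x)^{-1}$ in Proposition \ref{p2.2}.

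Finally, the identities carrying the parameters $n,m\in\Bbb{N}$, that is the formulas for $[x^{n},y^{\varphi}]$, for $[x,(y^{m})^{\varphi}]$, and for the mixed powers, I would obtain by induction. The base case $n=1$ is immediate, and the inductive step rewrites $[x^{n+1},y^{\varphi}]$ in terms of $[x^{n},y^{\varphi}]$ through the already-proven bilinear expansion, after which the normalization tool disposes of the conjugating factor; a symmetric argument handles the second slot, and a double induction settles the mixed-power case.

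The step I expect to be the main obstacle is the bookkeeping in the transfer identities and the double induction: every rearrangement produces a conjugated mixed commutator, so one must repeatedly invoke the normalization relation to return expressions to a canonical form, and it is easy to lose track of whether a given conjugation is being carried out in $G$ or in $G^{\varphi}$. Fixing a single convention, namely always rewriting conjugations so that they act on the $G$-coordinate of the mixed commutator, should keep the computation disciplined and reduce each identity to a short, essentially mechanical verification.
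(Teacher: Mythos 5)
First, a point of reference: the paper never proves this lemma at all --- it is quoted from Blyth and Morse \cite{1} (their Lemmas 9, 10 and 11) and used as a black box --- so your attempt can only be compared with the standard arguments in that source. For the identity-type parts, your plan is essentially that standard argument: work inside $\tau(G)$, take the defining relation $[x,y^{\varphi}]^{z}=[x^{z},(y^{z})^{\varphi}]=[x,y^{\varphi}]^{z^{\varphi}}$ as the basic normalization tool, obtain (iv) by expanding $[xy,(xy)^{\varphi}]=1$, move $\varphi$ across iterated commutators for (v) and (vi), and induct for the power statements (vii) and hence (ix). (Part (i) needs none of this machinery: it is the universal commutator expansion valid in every group, a fact worth stating rather than burying among the $\tau(G)$-specific identities.)

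Two genuine gaps remain. The larger one: your proposal never engages parts (ii), (iii) and (viii) --- that $\tau(G)$ has nilpotency class at most $c+1$ when $G$ has class $c$, that $[G,G^{\varphi}]$ is abelian when $G$ has class at most $2$, and that $[G,G^{\varphi}]$ has class $c$ or $c+1$ when $[G,G]$ has class $c$. These are not commutator identities and do not yield to the ``short, essentially mechanical verification'' you describe; (ii) needs an induction along the lower central series of $\tau(G)$, while (iii) and (viii) need (v)/(vi) combined with centrality arguments --- for instance, for (iii): $[x,y]\in Z(G)$ fixes $[v,w^{\varphi}]$ under conjugation by the defining relation, so $[[v,w^{\varphi}],[x,y]]=1$, and then (v) and (iv) convert this into $[[x,y^{\varphi}],[v,w^{\varphi}]]=1$. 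The second gap: in (vii) the hypothesis $[x,y]=1$ is essential, and your inductive step uses it silently. Writing $[x^{n+1},y^{\varphi}]=[x,y^{\varphi}]^{x^{n}}[x^{n},y^{\varphi}]$, the normalization tool gives $[x,y^{\varphi}]^{x^{n}}=[x^{x^{n}},(y^{x^{n}})^{\varphi}]$, and this collapses back to $[x,y^{\varphi}]$ only because $x^{x^{n}}=x$ and $y^{x^{n}}=y$, the latter requiring $[x,y]=1$. Without that hypothesis the conjugating factor does not ``dispose of'' itself; Lemma \ref{l2.4} of the paper exists precisely to record the nontrivial correction terms in $[x^{n},y]$ for non-commuting entries, so a proof of (vii) that never invokes the hypothesis would be proving something false.
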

\renewcommand {\labelenumi}{(\roman{enumi})}
\begin{enumerate}

\item{$[x,yz]=[x,z][x,y][x,y,z]$ and $[xy,z]=[x,z][x,z,y][y,z]$.}
\item{If $G$ is nilpotent of class $c$, then $\tau(G)$ is nilpotent of class at most $c+1$.}
\item{If $G$ is nilpotent of class $\leq 2$, then $[G,G^{\varphi}]$ is abelian.}
\item{$[x,y^{\varphi}]=[x^{\varphi},y]$.}
\item{$[x,y,z^{\varphi}]=[x,y^{\varphi},z]=[x^{\varphi},y,z]=[x^{\varphi},y^{\varphi},z]=[x^{\varphi},y,z^{\varphi}]=[x,y^{\varphi},z^{\varphi}]$.}
\item{$[[x,y^{\varphi}],[v,w^{\varphi}]]=[[x,y],[v,w]^{\varphi}]$.}
\item{$[x^n,y^{\varphi}]=[x,y^{\varphi}]^n=[x,(y^{\varphi})^n]$, where $[x,y]=1$.}
\item{If $[G,G]$ is nilpotent of class $c$, then $[G,G^{\varphi}]$ is nilpotent of class $c$ or $c+1$.}
\item{If $x$ and $y$ are commuting elements of $G$ of orders $m$ and $n$, respectively, then the order of $[x,y^{\varphi}]$ divides $\gcd(m,n)$.}
\end{enumerate}
In the process of the proofs in the third section, we need the following lemma to expanding the commutators.
\begin{lemma}\label{l2.4}
Let $G$ be a nilpotent group of class at most $6$. Then
\begin{align*}[x^n,y]=&[x,y]^n[x,y,x]^{\binom{n}{2}}[x,y,x,x]^{\binom{n}{3}}[x,y,x,x,x]^{\binom{n}{4}}[x,y,x,[x,y]]^{a(n)}\\&\quad[x,y,x,x,x,x]^{\binom{n}{5}}[x,y,x,[x,y],x]^{\binom{n}{3}+2{\binom{n}{4}}}[x,y,x,x,[x,y]]^{\binom{n}{3}+\binom{n}{4}},\end{align*}
for all $x,y\in \tau(G)$ and every positive integer $n$, where $a(n)=n(n-1)(2n-1)/6$.
\end{lemma}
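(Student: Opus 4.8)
The plan is to prove the identity by induction on $n$, carrying out a truncated commutator collection inside $\tau(G)$, which is nilpotent by Lemma \ref{l2.3}(ii). The engine of the argument is the pair of basic identities in Lemma \ref{l2.3}(i) together with the nilpotency hypothesis: since $G$ has class at most $6$, every commutator of weight $\ge 7$ that arises during collection is trivial, so the process terminates and only the eight displayed basic commutators can survive. For the base case $n=1$ the statement is immediate, because $\binom{1}{k}=0$ for $k\ge 2$ and $a(1)=0$, so both sides collapse to $[x,y]$.

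For the inductive step I would assume the formula for $n$ and write $x^{n+1}=x\cdot x^{n}$. Applying the identity $[ab,c]=[a,c][a,c,b][b,c]$ from Lemma \ref{l2.3}(i) gives
$$[x^{n+1},y]=[x,y]\,[x,y,x^{n}]\,[x^{n},y].$$
First I would expand the middle factor $[x,y,x^{n}]=[[x,y],x^{n}]$ by the same collection technique applied to the second slot. Because $[x,y]$ already has weight $2$, every correction term produced in this subexpansion either has weight at least $7$ or is of the form $[[x,y,x],[x,y,x]]=1$; hence this factor reduces to the pure left-normed product $[x,y,x]^{n}[x,y,x,x]^{\binom n2}[x,y,x,x,x]^{\binom n3}[x,y,x,x,x,x]^{\binom n4}$. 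Substituting the inductive hypothesis for $[x^{n},y]$ then writes $[x^{n+1},y]$ as a product of exactly the eight basic commutators of the statement, but in a non-canonical order.

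The remaining task is to collect this product into canonical order. The exponents of the four purely left-normed commutators $[x,y,x,\dots,x]$ follow at once from Pascal's identity $\binom{n}{k-1}+\binom{n}{k}=\binom{n+1}{k}$, each receiving one contribution from the expansion of $[x,y,x^{n}]$ and one from the inductive hypothesis. The delicate part is the three mixed commutators $[x,y,x,[x,y]]$, $[x,y,x,[x,y],x]$ and $[x,y,x,x,[x,y]]$, which pick up extra copies when the pure factors are transported leftwards past the $[x,y]^{n}$ coming from the inductive hypothesis. Here I would use that, modulo weight-$7$ commutators, all three mixed terms are central, so that for $a=[x,y,x]$ and $b=[x,y]$ one has $a^{n}b^{n}=b^{n}a^{n}[a,b]^{n^{2}}$ with $[a,b]=[x,y,x,[x,y]]$; this generates precisely the increment $a(n+1)-a(n)=n^{2}$ needed to pass from $a(n)$ to $a(n+1)$, using $a(n)=\sum_{k=1}^{n-1}k^{2}$. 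The two weight-$6$ mixed terms are treated the same way, their exponents being pinned down by the analogous closed forms $a(n)=\binom n2+2\binom n3$ and by Pascal-type relations for $\binom n3+2\binom n4$ and $\binom n3+\binom n4$.

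I expect the main obstacle to be exactly this bookkeeping of the weight-$5$ and weight-$6$ correction terms. One must track every commutator generated while reordering the pure factors both past $[x,y]^{n}$ and past one another, verify that all second-order corrections land in weight $\ge 7$ and therefore vanish, and check that the accumulated exponents coincide with the target combinatorial expressions. The class bound is used throughout: it is what guarantees that the collection closes after finitely many steps and that no basic commutators beyond the eight listed can appear, so the identity is an exact equality rather than a congruence.
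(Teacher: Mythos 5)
Your overall strategy coincides with the paper's (induction on $n$, the identities of Lemma \ref{l2.3}(i), and commutator collection in which everything of weight at least $7$ is discarded), but your mirrored decomposition is not harmless, and the inductive step as you describe it does not produce the stated exponents. The paper writes $[x^{n+1},y]=[x^n,y][x,y][x^n,y,x][x^n,y,x,[x,y]]$; the crucial feature of this route is that the factor $[x^n,y,x]$ commutates the \emph{entire} inductive-hypothesis product with $x$, so the factor $[x,y,x,[x,y]]^{a(n)}$ contributes $[x,y,x,[x,y],x]^{a(n)}$, which is exactly the increment this term needs, since $\bigl(\binom{n+1}{3}+2\binom{n+1}{4}\bigr)-\bigl(\binom{n}{3}+2\binom{n}{4}\bigr)=\binom{n}{2}+2\binom{n}{3}=a(n)$. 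In your route $[x^{n+1},y]=[x,y][x,y,x^n][x^n,y]$, after you pre-expand $[x,y,x^n]$ into the four pure left-normed terms, nothing is ever again commutated with $x$; consequently \emph{no} new copies of $[x,y,x,[x,y],x]$ are created and its exponent stays at $\binom{n}{3}+2\binom{n}{4}$, which is wrong. Meanwhile $[x,y,x,x,[x,y]]$ picks up $n\binom{n}{2}$ new copies (from pushing $[x,y,x,x]^{\binom{n}{2}}$ past $[x,y]^n$), whereas the statement requires an increment of $\binom{n}{2}+\binom{n}{3}$, and $n\binom{n}{2}\neq\binom{n}{2}+\binom{n}{3}$ for $n\geq 2$. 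So the ``Pascal-type relations'' you appeal to cannot close the induction: an honest collection along your route yields exponents $\binom{n}{3}+2\binom{n}{4}$ and $n\binom{n}{2}+\binom{n}{3}+\binom{n}{4}$ on the two weight-$6$ mixed terms, neither of which matches the lemma.

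The missing ingredient that would rescue your route is a Jacobi (Hall--Witt) argument: modulo commutators of weight $7$ one has $[x,y,x,[x,y],x]\equiv[x,y,x,x,[x,y]]$ (apply the graded Jacobi identity to $A=[x,y,x]$, $B=[x,y]$, $C=x$, noting that the middle term is $[[B,C],A]=[A,A]=1$). Your deficit on the first term and your surplus on the second are both equal to $a(n)$, so under this identification your collected product does equal the stated right-hand side; but without this extra step, which your proposal never identifies, the proof fails, and with it your route becomes genuinely different from (and longer than) the paper's. A further caveat, which your write-up shares with the paper's own terse proof: discarding weight-$7$ commutators is not justified by ``$G$ has class at most $6$,'' because the identity is asserted for $x,y\in\tau(G)$ and Lemma \ref{l2.3}(ii) bounds the class of $\tau(G)$ only by $7$; both arguments really prove the formula modulo $\gamma_7(\tau(G))$, which is what the applications in Sections $3$ and $4$ actually use.
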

\begin{proof}
We proceed by induction on $n$. The case that $n=1$ is obvious. By using the induction hypothesis we have:
\begin{align*}
[x^{n+1},y]&=[x^n,y][x^n,y,x][x,y]\\&=[x^n,y][x,y][x^n,y,x][x^n,y,x,[x,y]]\\&=[x,y]^{n+1}[x,y,x]^{\binom{n}{2}+n}[x,y,x,x]^{\binom{n}{2}+\binom{n}{3}}\\&\qquad [x,y,x,x,x]^{\binom{n}{3}+\binom{n}{4}}[x,y,x,[x,y]]^{a(n+1)}[x,y,x,x,x,x]^{\binom{n}{4}+\binom{n}{5}}\\ &\qquad  [x,y,x,[x,y],x]^{\binom{n}{2}+3{\binom{n}{3}}+2{\binom{n}{4}}}[x,y,x,x,[x,y]]^{\binom{n}{2}+2\binom{n}{3}+\binom{n}{4}}.
\end{align*}
Hence our conclusion follows.
\end{proof}

\begin{definition}\cite[Definition 5]{25''}\label{p2.5}
$G$ is polycyclic if it has a descending chain of subgroups
$G = G_1 \geq G_2 \geq ... \geq G_{n+1} = 1$ in which $G_{i+1} \lhd G_i$, and ${G_i}/{G_{i+1}}$ is cyclic.
Such a chain of subgroups is called a polycyclic series. 
\end{definition}
Since ${G_i}/{G_{i+1}}$ is cyclic, there exist $x_i$ with $<{x_i}{G_{i+1}}>={G_i}/{G_{i+1}}$ for every $i\in \{1,...,n\}$.

\begin{definition}\cite[Definition 7]{25''}
A polycyclic generating sequence of a group $G$ (PCGS) is a sequence $x_1,...,x_n$ of elements of $G$ such that $<{x_i}G_{i+1}>={G_i}/{G_{i+1}}$ for $1\leq i \leq n$. 
\end{definition}

\begin{definition}\cite[Definition 8]{25''}
Let $X$ be a PCGS for $G$. The sequence $R(X):=
(r_1,...,r_n)$ defined by $r_i:=|G_i:G_{i+1}|\in \Bbb{N}\cup \{\infty\}$ is the sequence of relative orders for $X$.
\end{definition}
Note that $G$ is finite if and only if every entry in $R(X)$ is finite.

\begin{lemma}\cite[Lemma 12]{25''}
Let $X = {x_1,...,x_n}$ be a polycyclic sequence for $G$ with the
relative orders $R(X) = (r_1,...,r_n)$. For every $g\in G$ there exists a sequence $(e_1,...,e_n)$, with $e_i\in \Bbb{Z}$ for $1 \leq i \leq n$ and $0 \leq e_i < r_i$, such that $g=x_{1}^{e_1},...,x_{n}^{e_n}$.
\end{lemma}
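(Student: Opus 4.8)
The plan is to prove the existence of such a normal-form expression $g = x_1^{e_1} x_2^{e_2} \cdots x_n^{e_n}$ by induction on the length $n$ of the polycyclic series $G = G_1 \geq G_2 \geq \cdots \geq G_{n+1} = 1$, stripping off one cyclic factor from the top at each stage. The base case $n = 1$ is immediate: here $G = G_1/G_2$ is cyclic of order $r_1$ generated by $x_1$, so every element is $x_1^{e_1}$ for a unique $e_1$ with $0 \leq e_1 < r_1$.

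For the inductive step I would first reduce modulo $G_2$. Because $G_2 \lhd G_1$ and $G_1/G_2 = \langle x_1 G_2 \rangle$ is cyclic of order $r_1$, the coset $g G_2$ is a power $(x_1 G_2)^{e_1}$ for a unique $e_1$ with $0 \leq e_1 < r_1$; equivalently $h := x_1^{-e_1} g \in G_2$. The key observation is that the truncated chain $G_2 \geq G_3 \geq \cdots \geq G_{n+1} = 1$ is a polycyclic series for $G_2$ for which $x_2, \ldots, x_n$ is a PCGS, with the same relative orders $r_2, \ldots, r_n$, since deleting the first term changes none of the indices $|G_i : G_{i+1}|$ for $i \geq 2$ nor the inherited normality and cyclicity. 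Applying the induction hypothesis to $h$ then gives $h = x_2^{e_2} \cdots x_n^{e_n}$ with $0 \leq e_i < r_i$, and left-multiplying by $x_1^{e_1}$ recovers the claimed expression for $g$.

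The argument is a clean descent, so the only points needing attention are bookkeeping rather than any genuine difficulty. One should check explicitly that the tail sequence is a bona fide PCGS for $G_2$ with the advertised relative orders, which follows directly from the definitions of a polycyclic series and of a PCGS, and one should fix the convention in the case $r_i = \infty$, where $G_i/G_{i+1} \cong \mathbb{Z}$ and the exponent $e_i$ must be allowed to range over a full set of representatives of the infinite cyclic quotient rather than being literally bounded. For the finite $p$-groups studied in this paper every $r_i$ equals $p$, so this last subtlety is moot, and the main (mild) obstacle is simply choosing the coset representative $e_1$ unambiguously at each step.
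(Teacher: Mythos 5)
Your proof is correct: the induction on the length of the polycyclic series, peeling off the top cyclic factor via $g G_2 = (x_1 G_2)^{e_1}$ and applying the inductive hypothesis to $G_2$ with the tail sequence $x_2,\ldots,x_n$ (which is indeed a PCGS for $G_2$ with the same relative orders), is the standard collection argument for existence of normal forms, and your caveat about the convention when $r_i=\infty$ is the right one to flag (and is indeed moot here, since every $r_i=p$). Note that the paper offers no proof of its own --- the lemma is quoted with only a citation to \cite[Lemma 12]{25''} --- so your argument is essentially the one the cited source supplies, not a genuinely different route.
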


\begin{definition}\cite[Definition 14]{25''}\label{p2.55}
The expression $g={x_1}^{e_1}\ldots {x_n}^{e_n}$ is the normal form of $g\in G$ with respect to $X$.
\end{definition}

\begin{definition}\cite[Definition 16]{25''}\label{p2.56}
A presentation $\{x_1,...,x_n | R\}$ is a polycyclic presentation, if there is a sequence $S=(s_1,...,s_n)$ with $s_i\in \Bbb{N}\cup \{\infty\}$ and integers $a_{i,k} , b_{i,j,k} , c_{i,j,k}$ such that $R$ consists of the following relations
$${x_i}^{s_i}={x_{i+1}}^{a_{i,i+1}}...{x_n}^{a_{i,n}} \ \ \ \text{for} \ \ \  1\leq i\leq n \ \ \text{with}\ \  {s_i}<\infty,$$
$${{x_j}^{-1}}{x_i}{x_j}={x_{i+1}}^{b_{i,j,j+1}}...{x_n}^{b_{i,j,n}} \ \ \  \text{for} \ \ \  1\leq j<i\leq n,$$
$${x_j}{x_i}{{x_j}^{-1}}={x_{j+1}}^{c_{i,j,j+1}}...{x_n}^{c_{i,j,n}} \ \ \ \text{for} \ \ \ 1\leq j<i\leq n.$$
\end{definition}
If $G$ is  defined by such a polycyclic presentation, then $G$ is called a $PC$ group. In addition to every $PC$ group can be deﬁned by a polycyclic presentation.

\begin{definition}\cite{25''}\label{p2.57}
A polycyclic presentation in which every element is represented by exactly one normal word is consistent. 
\end{definition}

In this paper, we will also use of consistent presentation in Section $4$ to calculate the Bogomolov multiplier of some $p$-groups of order $p^7$ and exponent $p$.

\begin{proposition}\cite[Proposition 20]{1}\label{p2.6}
Let $G$ be a finite group with a polycyclic generating sequence $x_1,\ldots ,x_n$, then the group $[G,G^{\varphi}]$ is generated by $$\{[x_i,{x_j}^{\varphi}] \ | \  i,j=1,\ldots ,n , i>j\}.$$
\end{proposition}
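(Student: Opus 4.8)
The plan is to establish the nontrivial inclusion $[G,G^{\varphi}]\subseteq H$, where I abbreviate $H:=\langle [x_i,x_j^{\varphi}]\mid 1\le j<i\le n\rangle$; the reverse inclusion is immediate since each $[x_i,x_j^{\varphi}]$ is one of the defining generators of $[G,G^{\varphi}]$. Because $[G,G^{\varphi}]$ is generated by the elements $[g,h^{\varphi}]$ with $g,h\in G$, it suffices to show that every such element lies in $H$. Before doing so I would widen the index range at no cost: by Lemma \ref{l2.3}(iv) one has $[a,b^{\varphi}]^{-1}=[b^{\varphi},a]=[b,a^{\varphi}]$, so for $i<j$ the commutator $[x_i,x_j^{\varphi}]=[x_j,x_i^{\varphi}]^{-1}$ is the inverse of a generator of $H$, while $[x_i,x_i^{\varphi}]=1$ by Definition \ref{d2.1}. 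Hence $H=\langle [x_i,x_j^{\varphi}]\mid 1\le i,j\le n\rangle$, and I only need to reach this full set of commutators on the generators.

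First I would expand an arbitrary generator. Writing $g=x_1^{e_1}\cdots x_n^{e_n}$ and $h=x_1^{f_1}\cdots x_n^{f_n}$ in normal form (Definition \ref{p2.55}) and applying the exterior-pairing relations in the form $[ab,c^{\varphi}]=[a,c^{\varphi}]^{b}[b,c^{\varphi}]$ and $[a,(bc)^{\varphi}]=[a,c^{\varphi}][a,b^{\varphi}]^{c}$, together with the conjugation rule $[a,b^{\varphi}]^{z}=[a^{z},(b^{z})^{\varphi}]$ of Definition \ref{d2.1} and the power expansion of Lemma \ref{l2.4}, one peels off the generators one at a time and rewrites $[g,h^{\varphi}]$ as a product of $G$-conjugates of the commutators $[x_i,x_j^{\varphi}]$. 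Thus the whole statement reduces to absorbing these conjugations, i.e. to showing that $H$ is invariant under conjugation by $G$ (and, symmetrically, by $G^{\varphi}$).

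The hard part is exactly this normality, and I would prove that $H$ is a normal subgroup of $\tau(G)$ by checking invariance under conjugation by each generator $x_k$ and $x_k^{\varphi}$. For $x_k$ one has $[x_i,x_j^{\varphi}]^{x_k}=[x_i^{x_k},(x_j^{x_k})^{\varphi}]=[\,x_i[x_i,x_k],(x_j[x_j,x_k])^{\varphi}\,]$, and expanding this by the bilinear relations above produces $[x_i,x_j^{\varphi}]$ times correction terms of the shape $[x_i,x_j^{\varphi},\dots]$ whose extra entries come from $[x_i,x_k]$ and $[x_j,x_k]$. Using Lemma \ref{l2.3}(v) to normalize the pattern of $\varphi$'s on these iterated commutators and Lemma \ref{l2.3}(viii) to locate them in the lower central series, I would run a downward induction on commutator weight, which terminates because $\tau(G)$ is nilpotent for the $p$-groups under study (Lemma \ref{l2.3}(ii)): the top-weight terms are central and are manifestly products of the generating commutators, and each lower stage then reassembles inside $H$. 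Conjugation by $x_k^{\varphi}$ is handled in the same way via Lemma \ref{l2.3}(iv),(v). I expect the genuine obstacle to be the bookkeeping in this weight induction, namely guaranteeing that \emph{every} correction commutator, once its $\varphi$-pattern has been normalized, is again a product of generating commutators of strictly higher weight so that the induction closes; this is precisely where the explicit expansion of Lemma \ref{l2.4} earns its keep.

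Finally, once normality is in hand I would conclude through the quotient $Q:=\tau(G)/H$. There every generating commutator $[\bar x_i,\bar x_j^{\varphi}]$ is trivial, and propagating this through the same bilinear relations, now harmless since the conjugating corrections fix the identity, shows that the images of $G$ and $G^{\varphi}$ commute elementwise in $Q$. Therefore $\overline{[G,G^{\varphi}]}=[\bar G,\bar G^{\varphi}]=1$, that is $[G,G^{\varphi}]\subseteq H$, which together with the trivial reverse inclusion gives the claim. Equivalently, with normality established, each $G$-conjugate produced in the expansion of $[g,h^{\varphi}]$ already lies in $H$, so their product does too, and the inclusion follows directly.
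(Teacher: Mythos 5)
A preliminary remark: the paper never proves Proposition \ref{p2.6} itself; the result is imported from Blyth and Morse \cite[Proposition 20]{1}, so your attempt has to be measured against that proof. Your set-up is sound: widening the index set via Lemma \ref{l2.3}(iv), expanding $[g,h^{\varphi}]$ along normal forms, and reducing everything to conjugation-invariance of $H=\langle [x_i,x_j^{\varphi}]\mid i>j\rangle$, with conjugation controlled by $[x,y^{\varphi}]^{z}=[x,y^{\varphi}]\,[x,y,z^{\varphi}]$ (Lemma \ref{l2.3}(v)). The genuine gaps sit exactly in the step you defer. First, your termination argument needs $\tau(G)$ nilpotent, hence $G$ nilpotent (Lemma \ref{l2.3}(ii)); but the proposition concerns an arbitrary finite group with a polycyclic generating sequence, i.e.\ an arbitrary finite solvable group. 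For $G=S_3$ with the sequence $x_1=(1\,2)$, $x_2=(1\,2\,3)$ the statement is true and nontrivial (it says the single element $[x_2,x_1^{\varphi}]$ generates $[G,G^{\varphi}]\cong \Bbb{Z}_3$), yet $\gamma_k(S_3)=A_3$ for all $k\geq 2$, so commutator weight never increases past $2$ and your descending induction has no base case: it does not terminate. Any proof of the full statement must exploit the polycyclic series $G_i$ itself (each $G_{i+1}\lhd G_i$ with cyclic quotient) rather than the lower central series; that is how \cite{1} proceeds, and your argument never uses this structure. At best you prove the nilpotent special case, which suffices for this paper's groups but not for the proposition as stated.

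Second, even in the nilpotent case the claim that closes your induction --- every correction commutator is a product of generating commutators of strictly higher weight --- is asserted rather than proved, and it is false for a general polycyclic generating sequence. The correction produced by conjugation is $[[x_i,x_j],x_k^{\varphi}]$, whose first entry $[x_i,x_j]$ is not a member of the generating sequence; to recurse you must rewrite it as a word in the $x_t$, and nothing in the hypotheses guarantees that this word involves only deeper generators. Take the modular group $G=\langle a,b\mid a^{p^2}=b^p=1,\ [a,b]=a^{p}\rangle$ with the legitimate polycyclic generating sequence $x_1=b$, $x_2=a$ (relative orders $p$, $p^2$): here $[a,b]=a^p$ is a power of the weight-one generator $a$, and the correction $[[a,b],b^{\varphi}]=[a^{p},b^{\varphi}]$ expands by Lemma \ref{l2.4} into $[a,b^{\varphi}]^{p}$ times higher terms, i.e.\ into a power of the original weight-two generator rather than anything of strictly higher weight. (The conclusion survives here for trivial reasons, but the well-foundedness of your induction is exactly what breaks.) What silently rescues your scheme for the groups of this paper is that Wilkinson's presentations are adapted to the lower central series: every nontrivial $[x_i,x_j]$ is a word in strictly later generators lying in $\gamma_2(G)$, so rewriting preserves the weight gain. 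That is an additional hypothesis, used but never stated in your proof; either add it explicitly (accepting, together with the nilpotency restriction, a proof of a special case only), or replace the weight induction by induction on the length of the polycyclic series as in \cite{1}.
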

\begin{proposition}\cite[Proposition 3.2]{23}\label{p2.7}
Let $p>3$ be a prime number, $G$ be a $p$-group of class at most $3$ and $x_1,\ldots ,x_n$ be a polycyclic generating sequence of $G$. If all nontrivial commutators $[x_i,x_j]$ $(i>j)$ are different elements of the polycyclic generating sequence, then ${B_0}(G)=0$.
\end{proposition}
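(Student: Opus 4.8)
The plan is to establish the stronger statement that $\mathcal{M}^{*}(G) = {\mathcal{M}_0}^{*}(G)$. Since ${\mathcal{M}_0}^{*}(G) \subseteq \mathcal{M}^{*}(G)$ always holds and $\tilde{B_0}(G) \cong \mathcal{M}^{*}(G)/{\mathcal{M}_0}^{*}(G)$, this yields ${B_0}(G)=0$. Because ${\mathcal{M}_0}^{*}(G)$ lies in $\ker {\kappa}^{*}=\mathcal{M}^{*}(G)$, the map ${\kappa}^{*}$ descends to $\bar{\kappa}^{*}\colon Q\to [G,G]$ on the quotient $Q:=[G,G^{\varphi}]/{\mathcal{M}_0}^{*}(G)$, and proving the proposition is equivalent to proving that $\bar{\kappa}^{*}$ is an isomorphism. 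By Proposition \ref{p2.6} the group $[G,G^{\varphi}]$ is generated by the $[x_i,x_j^{\varphi}]$ with $i>j$, and $\bar{\kappa}^{*}$ sends each such generator to $[x_i,x_j]$; hence $\bar{\kappa}^{*}$ is surjective and the entire difficulty is injectivity.

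First I would record two structural simplifications coming from the class hypothesis. As $G$ is nilpotent of class at most $3$, we have $[[G,G],[G,G]]\subseteq \gamma_4(G)=1$, so $[G,G]$ is abelian. Moreover, Lemma \ref{l2.3}(vi) gives $[[x_i,x_j^{\varphi}],[x_s,x_t^{\varphi}]]=[[x_i,x_j],[x_s,x_t]^{\varphi}]$, and since $[G,G]$ is abelian the right-hand side is of the form $[z,w^{\varphi}]$ with $[z,w]=1$, hence lies in ${\mathcal{M}_0}^{*}(G)$. As the $[x_i,x_j^{\varphi}]$ generate $[G,G^{\varphi}]$, this shows $Q$ is abelian. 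Thus both the source and target of $\bar{\kappa}^{*}$ are abelian $p$-groups, and it suffices to construct an explicit inverse.

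Next I would build $\psi\colon [G,G]\to Q$. This is exactly where the hypothesis enters: each nontrivial commutator $[x_i,x_j]$ equals some generator $x_{\ell}$, and distinct pairs give distinct $x_{\ell}$, so the assignment $x_{\ell}=[x_i,x_j]\mapsto [x_i,x_j^{\varphi}]\,{\mathcal{M}_0}^{*}(G)$ is unambiguous; since $[G,G]$ is generated by precisely these $x_{\ell}$, this defines $\psi$ on a generating set. To see that $\psi$ extends to a homomorphism I must check that every defining relation of the abelian group $[G,G]$ is respected. The commutator relations are automatic because $Q$ is abelian, so only the power relations $x_{\ell}^{p}=\prod_m x_m^{c_m}$ read off from a consistent polycyclic presentation remain. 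I would lift these by applying Lemma \ref{l2.4} to expand $[x_i^{p},x_j^{\varphi}]$ inside $\tau(G)$. By Lemma \ref{l2.3}(ii), $\tau(G)$ has class at most $4$, so every commutator of weight $\geq 5$ in the expansion vanishes and only $[x_i,x_j^{\varphi}]^{p}$, $[x_i,x_j^{\varphi},x_i]^{\binom{p}{2}}$ and $[x_i,x_j^{\varphi},x_i,x_i]^{\binom{p}{3}}$ survive. Because $p>3$, both $\binom{p}{2}$ and $\binom{p}{3}$ are divisible by $p$, and using Lemma \ref{l2.3}(v),(vii) the two surviving correction terms rewrite as $p$-th powers of commutators $[z,w^{\varphi}]$ which, by the class hypothesis together with Lemma \ref{l2.3}(vi), fall into ${\mathcal{M}_0}^{*}(G)$. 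This identifies $[x_i,x_j^{\varphi}]^{p}$ modulo ${\mathcal{M}_0}^{*}(G)$ with the image of $x_{\ell}^{p}$, so each power relation lifts and $\psi$ is a well-defined homomorphism.

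Finally, $\bar{\kappa}^{*}\circ\psi=\mathrm{id}_{[G,G]}$ holds on generators, since $\bar{\kappa}^{*}\bigl([x_i,x_j^{\varphi}]\,{\mathcal{M}_0}^{*}(G)\bigr)=[x_i,x_j]=x_{\ell}$, and $\psi$ is surjective because its image already contains every generator $[x_i,x_j^{\varphi}]\,{\mathcal{M}_0}^{*}(G)$ of $Q$ (those with $[x_i,x_j]=1$ being trivial in $Q$). Hence $\psi$ and $\bar{\kappa}^{*}$ are mutually inverse isomorphisms, giving $\mathcal{M}^{*}(G)={\mathcal{M}_0}^{*}(G)$ and therefore ${B_0}(G)=0$. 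I expect the genuine obstacle to be the third paragraph: organizing the commutator calculus so that every correction term produced by Lemma \ref{l2.4} is provably absorbed into ${\mathcal{M}_0}^{*}(G)$. This is precisely where the hypotheses $p>3$ (to kill $\binom{p}{2}$ and $\binom{p}{3}$ modulo $p$) and class at most $3$ (to truncate the expansion and force $[G,G]$ abelian) are indispensable, while the distinctness of the commutator generators is what makes $\psi$ well-defined at all.
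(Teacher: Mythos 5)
Your skeleton is sound, and it is worth noting that it is essentially the method this paper itself uses: the paper does not prove Proposition \ref{p2.7} at all (it quotes it from Moravec \cite{23}), but the proofs of its Propositions \ref{p3.2} and \ref{p3.3} run exactly this computation, and constructing your homomorphism $\psi$ is logically the same task as the element chase there --- well-definedness of $\psi$ is precisely the assertion that every relation $\prod_{i>j}[x_i,x_j]^{\alpha_{ij}}=1$ in $G$ forces $\prod_{i>j}[x_i,{x_j}^{\varphi}]^{\alpha_{ij}}\in {\mathcal{M}_0}^{*}(G)$. The problem is that your third paragraph, which you yourself flag as the crux, does not close, for two concrete reasons, both traceable to the fact that the proposition does \emph{not} assume $\exp(G)=p$.

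First, the relations you propose to check are not the right ones. The relation lattice of the abelian group $[G,G]$ on the generating set $\{x_{\ell}\}$ is not given by ``power relations $x_{\ell}^{p}=\prod_m x_m^{c_m}$ read off from a consistent polycyclic presentation'' of $G$: the relative orders in such a presentation may be proper powers of $p$; the right-hand side of the power relation for $x_{\ell}$ in $G$ may involve pcgs elements that are not commutators, hence lie outside the domain of $\psi$; and the defining relations of $[G,G]$ on the $x_{\ell}$ are governed by the orders of the $x_{\ell}$ modulo the later commutator generators \emph{inside} $[G,G]$, which cannot simply be read off from the presentation of $G$. Second, the absorption of correction terms is unjustified as stated. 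By Lemma \ref{l2.3}(v) the surviving term $[x_i,{x_j}^{\varphi},x_i]^{\binom{p}{2}}$ equals $[[x_i,x_j],{x_i}^{\varphi}]^{\binom{p}{2}}$, and $[[x_i,x_j],x_i]$ is an element of $\gamma_3(G)$ that is typically nontrivial in class exactly $3$: already in the paper's $G_5$ one has $[[b,a],b]=[c,b]=e\neq 1$. So this is not a commutator of commuting elements of $G$; Lemma \ref{l2.3}(vi) says nothing about it, and Lemma \ref{l2.3}(vii) cannot be used to move the exponent inside, since its hypothesis $[x,y]=1$ fails. The repair is a second application of Lemma \ref{l2.4} (whose own corrections do lie in ${\mathcal{M}_0}^{*}(G)$ because $[\gamma_3(G),\gamma_2(G)]=1$), followed by the identity $[x_i,x_j]^{\binom{p}{2}}=1$ --- and that last step needs $\exp(G)=p$. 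In short, once patched, your argument proves the exponent-$p$ case, which is all this paper ever applies the proposition to and is exactly how its Section 3 computations go (compare $1=[c^p,b^{\varphi}]=[c,b^{\varphi}]^{p}[c,b^{\varphi},c]^{\binom{p}{2}}[c,b^{\varphi},c,c]^{\binom{p}{3}}$ in the proof for $G_{20}$); but it is not a proof of the proposition in its stated generality, which requires the more delicate bookkeeping of Moravec's original argument.
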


\section{\bf{Groups of order $p^7$ and exponent $p$ $(p>5)$ with trivial Bogomolov multiplier}}
The groups of exponent $p$ and order $p^7$, were classified by Wilkinson in \cite{31}. Each group is presented by seven generators, named alphabetically from $a$ to $g$. The non trivial commutator relations between generators are introduced as $[b, a] = cg$, the right side of this relation is the usual product of the elements of the group. It is assumed that all other commutators are identity, and that the $p$th power of every generator is identity. Using the notations of \cite{31}, we have the following propositions.
\\
Here, we use various techniques to determine group with trivial Bogomolov multiplier depending on the structure of the group, so we state the results in separate propositions. 
\begin{proposition}\label{p3.1}
 The following groups, have trivial Bogomolov multiplier.
\begin{enumerate}[]
\item{$G_1: \text{Elementary abelian groups}$}.
\item{$G_2: <a,b,c,d,e,f,g \ | \  [b,a]=c>$}.
\item{$G_3: <a,b,c,d,e,f,g \ | \  [b,a]=c , [c,a]=d>$}.
\item{$G_4: <a,b,c,d,e,f,g \ | \  [b,a]=d , [c,a]=e>$}.
\item{$G_5: <a,b,c,d,e,f,g \ | \  [b,a]=c , [c,a]=d , [c,b]=e>$}.
\item{$G_8: <a,b,c,d,e,f,g \ | \  [b,a]=c , [c,a]=d , [d,a]=e>$}.
\item{$G_{10}: <a,b,c,d,e,f,g \ | \  [b,a]=d , [c,a]=e , [c,b]=f>$}.
\item{$G_{11}: <a,b,c,d,e,f,g \ | \  [b,a]=e , [d,c]=f>$}.
\item{$G_{15}: <a,b,c,d,e,f,g \ | \  [b,a]=c , [c,a]=e , [d,b]=f>$}.
\item{$G_{17}: <a,b,c,d,e,f,g \ | \  [b,a]=c , [c,a]=e , [d,a]=f>$}.
\item{$G_{35}: <a,b,c,d,e,f,g \ | \  [b,a]=e , [c,a]=f , [d,c]=g>$}.
\item{$G_{39}: <a,b,c,d,e,f,g \ | \  [b,a]=e , [c,a]=f , [d,a]=g>$}.
\item{$G_{40}: <a,b,c,d,e,f,g \ | \  [b,a]=c , [c,a]=e , [d,a]=f , [d,b]=g>$}.
\item{$G_{41}: <a,b,c,d,e,f,g \ | \  [b,a]=c , [c,a]=e , [c,b]=f , [d,a]=g>$}.
\item{$G_{45}: <a,b,c,d,e,f,g \ | \  [b,a]=c , [c,a]=f , [e,d]=g>$}.
\end{enumerate}
\end{proposition}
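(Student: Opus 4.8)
My plan is to split the fifteen groups into three tiers according to how much machinery each needs, and to keep in mind throughout the reduction from Section 2: since $\tilde{B_0}(G)\cong \mathcal{M}^{*}(G)/{\mathcal{M}_0}^{*}(G)$, it is enough to prove the inclusion $\mathcal{M}^{*}(G)\subseteq {\mathcal{M}_0}^{*}(G)$ for each group. The group $G_1$ is disposed of immediately: in an elementary abelian group every pair of generators commutes, so $\mathcal{M}^{*}(G_1)={\mathcal{M}_0}^{*}(G_1)$ and $\tilde{B_0}(G_1)=0$.

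For the main body of the list I would invoke Proposition \ref{p2.7} uniformly. For each of $G_2,G_3,G_4,G_5,G_{10},G_{11},G_{15},G_{17},G_{35},G_{39},G_{40},G_{41}$ and $G_{45}$ I would first read the nilpotency class off the presentation and confirm it is at most $3$: the deepest relations, such as $[c,a]=d$ in $G_3$ or $[c,a]=e$ in $G_{15}$, produce a commutator of weight only $3$, and every commutator not displayed is trivial by the standing convention, so $\gamma_4=1$. I would then check that each defining relation equates a commutator $[x_i,x_j]$ of two members of the sequence $a,b,c,d,e,f,g$ with a \emph{single} generator, and that across the relations of a given group these right-hand generators are pairwise distinct (for instance $c,d,e$ in $G_5$, and $c,e,f,g$ in $G_{40}$ and $G_{41}$). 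Since $p>5>3$, the hypotheses of Proposition \ref{p2.7} then hold, and it yields $B_0(G)=0$ for all thirteen of these groups. The only care needed is to be sure that no \emph{undisplayed} commutator is accidentally nontrivial, which is guaranteed by the presentation convention.

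The genuine obstacle is $G_8$, with $[b,a]=c$, $[c,a]=d$, $[d,a]=e$. Here $e=[b,a,a,a]$ is a nonzero commutator of weight $4$, so $G_8$ has nilpotency class $4$ and Proposition \ref{p2.7} is unavailable. I would resolve this inside the machinery of Section 2. By Proposition \ref{p2.6}, $[G_8,G_8^{\varphi}]$ is generated by the elements $[x_i,x_j^{\varphi}]$ with $i>j$; since $e,f,g$ are central and $f,g$ occur in no relation, all of these generators except $[b,a^{\varphi}]$, $[c,a^{\varphi}]$ and $[d,a^{\varphi}]$ already lie in ${\mathcal{M}_0}^{*}(G_8)$, and these three map under $\kappa^{*}$ onto $c,d,e$. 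The task is then to take an arbitrary $w=\prod[x_i,y_i^{\varphi}]^{\varepsilon_i}\in \mathcal{M}^{*}(G_8)$, one with $\prod[x_i,y_i]^{\varepsilon_i}=1$, and rewrite it as a product of commutators of commuting pairs. This is where Lemmas \ref{l2.3} and \ref{l2.4} are meant to help: parts (v)--(vii) of Lemma \ref{l2.3} interchange and split the mixed commutators, while the power expansion of Lemma \ref{l2.4} governs the weight-$4$ contribution forced by the chain $[d,a]=e$; it is precisely to make the binomial coefficients $\binom{p}{2},\dots,\binom{p}{5}$ occurring there vanish modulo $p$ that the hypothesis $p>5$ is imposed. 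I expect this bookkeeping to be the main technical hurdle of the whole proposition. As an independent check, note that $f,g$ split off as central direct factors, so $G_8\cong M\times C_p\times C_p$ with $M=\langle a,b\rangle$ the maximal-class group of order $p^5$; hence $G_8$ is isoclinic to $M$, and since $\tilde{B_0}$ is an isoclinism invariant and $M$ does not belong to the exceptional family $\Phi_{10}$ of the order-$p^5$ classification, one again obtains $\tilde{B_0}(G_8)=\tilde{B_0}(M)=0$.
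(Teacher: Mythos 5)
Your handling of $G_1$ and of the thirteen groups other than $G_8$ is exactly the paper's proof: the paper disposes of the whole list in one sentence by checking that the groups have class at most $3$, that the nontrivial commutators $[x_i,x_j]$ are pairwise distinct members of the polycyclic generating sequence, and citing Proposition \ref{p2.7}. The real divergence is $G_8$, and there your proposal is the more careful one. You are right that $G_8$ has nilpotency class $4$: the chain $[b,a]=c$, $[c,a]=d$, $[d,a]=e$ makes $e=[b,a,a,a]$ a nontrivial commutator of weight $4$, so Proposition \ref{p2.7}, whose hypothesis is class at most $3$, does not apply to it. The paper nevertheless includes $G_8$ in its blanket claim of class at most $3$ and applies Proposition \ref{p2.7} to it; this is a genuine defect of the paper's own proof (the conclusion remains true), and by the paper's organizational logic $G_8$ really belongs with the chain groups $G_{30}$ and $G_{190}$ of Proposition \ref{p3.2}, which are treated by direct computation.

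As for your two repairs of $G_8$: the isoclinism argument is the one that actually completes the proof. Indeed $f,g$ generate a central direct factor, $G_8\cong M\times{\Bbb{Z}}_p\times{\Bbb{Z}}_p$ with $M=\langle a,b,c,d,e\rangle$ of order $p^5$ and maximal class, so $G_8$ is isoclinic to $M$ and $\tilde{B_0}(G_8)\cong\tilde{B_0}(M)$ by \cite{24}; by \cite{10,23} it then suffices to know $M\notin\Phi_{10}$. You assert this last fact without justification. It is true, and the quickest way to close it is to observe that $\langle b,c,d,e\rangle$ is an abelian maximal subgroup of $M$ (no defining relation of $G_8$ involves two of $b,c,d,e$), so $M$ is abelian-by-cyclic and $B_0(M)=0$ by Bogomolov's criterion \cite{2}; this both certifies $M\notin\Phi_{10}$ and in fact finishes the argument without invoking the order-$p^5$ classification at all. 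Your first repair, the direct computation with Proposition \ref{p2.6} and Lemmas \ref{l2.3}, \ref{l2.4}, is left as a plan rather than carried out (you never verify that $[b,a^{\varphi}]$, $[c,a^{\varphi}]$, $[d,a^{\varphi}]$ commute modulo ${\mathcal{M}_0}^{*}(G_8)$ and have trivial $p$-th powers there), but it would go through verbatim along the lines of the paper's proof for $G_{30}$ and $G_{190}$. In summary: your proposal is correct, coincides with the paper on fourteen of the fifteen groups, and on $G_8$ supplies a justification that the paper's one-line proof is missing.
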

\begin{proof}
These groups are nilpotent of class at most $3$ and all nontrivial commutators $[x_i,x_j]$ $(i>j)$ are different elements of the polycyclic generating sequence. Thus, by using Proposition \ref{p2.7}, they have trivial Bogomolov multiplier.
\end{proof}
\begin{proposition}\label{p3.2}
The following groups, have trivial Bogomolov multiplier.
\begin{enumerate}[]
\item{$G_{20}: <a,b,c,d,e,f,g \ | \  [b,a]=c , [c,a]=d , [c,b]=e , [d,a]=f>$}.
\item{$G_{30}: <a,b,c,d,e,f,g \ | \  [b,a]=c , [c,a]=d , [d,a]=e , [e,a]=f>$}.
\item{$G_{60}: <a,b,c,d,e,f,g \ | \  [b,a]=c , [c,a]=d , [d,a]=f , [e,b]=g>$}.
\item{$G_{64}: <a,b,c,d,e,f,g \ | \  [b,a]=c , [c,a]=d , [d,a]=f , [e,a]=g>$}.
\item{$G_{70}: <a,b,c,d,e,f,g \ | \  [b,a]=d , [c,a]=e , [d,b]=f , [e,c]=g>$}.
\item{$G_{75}: <a,b,c,d,e,f,g \ | \  [b,a]=d , [c,a]=e , [d,a]=f , [e,c]=g>$}.
\item{$G_{85}: <a,b,c,d,e,f,g \ | \  [b,a]=d , [c,a]=e , [d,a]=f , [e,a]=g>$}.
\item{$G_{95}: <a,b,c,d,e,f,g \ | \  [b,a]=c , [c,a]=d , [c,b]=e , [d,a]=f , [e,b]=g>$}.
\item{$G_{103}: <a,b,c,d,e,f,g \ | \  [b,a]=c , [c,a]=d , [c,b]=f , [d,a]=e , [e,a]=g>$}.
\item{$G_{190}: <a,b,c,d,e,f,g \ | \  [b,a]=c , [c,a]=d , [d,a]=e , [e,a]=f , [f,a]=g>$}.
\end{enumerate}
\end{proposition}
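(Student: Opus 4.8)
The plan is to show, for each of the ten groups $G$ in the list, that $\mathcal{M}^{*}(G)\subseteq{\mathcal{M}_0}^{*}(G)$; by the reduction noted just before Proposition \ref{p2.6} this forces $\tilde{B_0}(G)=0$. Unlike the groups of Proposition \ref{p3.1}, these contain chains of iterated commutators (for instance $[b,a]=c,\ [c,a]=d,\ [d,a]=f$ inside $G_{60}$, and the full six-term chain in $G_{190}$), so Proposition \ref{p2.7} does not apply and I would work directly inside $[G,G^{\varphi}]$. First I would fix the alphabetical sequence $a,\dots,g$ as a polycyclic generating sequence and invoke Proposition \ref{p2.6} to write $[G,G^{\varphi}]$ as the group generated by the $[x_i,x_j^{\varphi}]$ with $i>j$. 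Using the defining relations together with Lemma \ref{l2.3}, in particular the identities (iv), (v) and (vi), I would split these generators into the \emph{commuting generators} $[x_i,x_j^{\varphi}]$ with $[x_i,x_j]=1$, which lie in ${\mathcal{M}_0}^{*}(G)$ by definition, and the \emph{non-commuting generators} coming from the defining commutator relations.

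The central claim I would establish is that for every group in the list $[G,G]$ is elementary abelian, with the generators on the right-hand sides of the defining relations as a basis, and that under $\kappa^{*}$ the non-commuting generators $[x_i,x_j^{\varphi}]$ map exactly onto this basis via $[x_i,x_j^{\varphi}]\mapsto[x_i,x_j]$. Granting this, the computation of $\mathcal{M}^{*}(G)=\ker\kappa^{*}$ becomes transparent after passing to the abelianization: since $[G,G]$ is abelian, $\kappa^{*}$ factors through $[G,G^{\varphi}]^{\mathrm{ab}}$, where the non-commuting generators are sent to a linearly independent set and the commuting generators to $0$. Hence any element of $\ker\kappa^{*}$, written in the generators, must have every non-commuting generator occurring with exponent divisible by $p$; once I know these generators have order $p$, the non-commuting part vanishes in $[G,G^{\varphi}]^{\mathrm{ab}}$ and the kernel collapses onto the subgroup generated by the commuting generators together with $[[G,G^{\varphi}],[G,G^{\varphi}]]$.

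It then remains to place both of these pieces inside ${\mathcal{M}_0}^{*}(G)$. For the commutator subgroup I would use Lemma \ref{l2.3}(vi): each basic commutator $[[x,y^{\varphi}],[v,w^{\varphi}]]$ equals $[[x,y],[v,w]^{\varphi}]$, and since $[x,y],[v,w]\in[G,G]$ commute, this is a commuting generator, so $[[G,G^{\varphi}],[G,G^{\varphi}]]\subseteq{\mathcal{M}_0}^{*}(G)$. For the order-$p$ claim I would apply Lemma \ref{l2.4} with $n=p$ to each non-commuting generator: since $G$ has exponent $p$ the left-hand side $[x_i^{p},x_j^{\varphi}]$ is trivial, while for $p>5$ every coefficient $\binom{p}{2},\binom{p}{3},\binom{p}{4},\binom{p}{5},a(p)$ and the two mixed coefficients in Lemma \ref{l2.4} are divisible by $p$; a descending induction on the depth of the chain (so that each correction term involves deeper commutators already known to have order $p$) then yields $[x_i,x_j^{\varphi}]^{p}=1$. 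Combining these facts gives $\mathcal{M}^{*}(G)\subseteq{\mathcal{M}_0}^{*}(G)$ for each group.

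The step I expect to be the main obstacle is the central claim of the second paragraph: verifying group by group that $[G,G]$ is genuinely elementary abelian on the stated basis and that there are \emph{no} non-commuting pairs beyond the defining relations. For the longer chains ($G_{30}$, $G_{103}$ and especially the maximal-class group $G_{190}$) one must rule out hidden relations such as $[c,d]$, $[d,e]$ or $[e,b]$ using the consistency of Wilkinson's presentation together with repeated Hall--Witt expansions of the type in Lemma \ref{l2.3}(i); a single surviving relation among distinct non-commuting pairs would produce a kernel element whose membership in ${\mathcal{M}_0}^{*}(G)$ is no longer automatic and would have to be handled separately. Once independence of the commutator values is confirmed, the remainder of the argument is uniform across all ten groups.
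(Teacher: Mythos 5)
Your proposal is correct and follows essentially the same route as the paper's proof: generators of $[G,G^{\varphi}]$ from Proposition \ref{p2.6}, pairwise commutation modulo ${\mathcal{M}_0}^{*}(G)$ via Lemma \ref{l2.3}(vi), the fact that the defining commutator values are \emph{distinct} elements of the polycyclic generating sequence (so $\kappa^{*}(w)=1$ forces every exponent to be divisible by $p$), and Lemma \ref{l2.4} together with $\exp(G)=p$ and $p>5$ to conclude that each generator has order $p$. The only remark worth making is that the step you flag as the ``main obstacle'' is not one: Wilkinson's presentations, as quoted at the start of Section 3, declare all unlisted commutators and all $p$th powers trivial, so $[G,G]$ is elementary abelian on the stated basis by fiat and no Hall--Witt verification is required.
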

\begin{proof}
We state the proof in details to $G_{20}$ and $G_{190}$, the remaining case can be proved in similar way. 
\\
Let $G\cong {G_{20}}$. By using Proposition \ref{p2.6}, $[G_{20},{G_{20}^{\varphi}}]$ is generated by $[b,a^{\varphi}] ,[c,a^{\varphi}]$, $[c,b^{\varphi}] , [d,a^{\varphi}] $ modulo ${\mathcal{M}_0}^{*}(G_{20})$. Using Lemma \ref{l2.3} (vi), we have $$[[b,a^{\varphi}],[c,a^{\varphi}]]=[[b,a],[c,a]^{\varphi}]=[c,d^{\varphi}]\in {\mathcal{M}_0}^{*}(G_{20}),$$
 and
$$[[b,a^{\varphi}],[c,b^{\varphi}]]=[[b,a],[c,b]^{\varphi}]=[c,e^{\varphi}]\in {\mathcal{M}_0}^{*}(G_{20}).$$
Similarly,
$$[[b,a^{\varphi}],[d,a^{\varphi}]], [[c,a^{\varphi}],[c,b^{\varphi}]], [[c,a^{\varphi}],[d,a^{\varphi}]], [[c,b^{\varphi}],[d,a^{\varphi}]]\in {\mathcal{M}_0}^{*}(G_{20}).$$
Thus any two elements of the generating set of $[G_{20},G_{20}^{\varphi}]$, are commuting modulo ${\mathcal{M}_0}^{*}(G_{20})$, and each element of $[G_{20},G_{20}^{\varphi}]$ can be expressed as
$$[b,a^{\varphi}]^{{\alpha}_1} [c,a^{\varphi}]^{{\alpha}_2} [c,b^{\varphi}]^{{\alpha}_3} [d,a^{\varphi}]^{{\alpha}_4} {\tilde{w}},$$
where ${\tilde{w}}\in {\mathcal{M}_0}^{*}(G_{20})$, and $1\leq i\leq 4$, ${{\alpha}_i}\in {\Bbb{Z}}$. Let $w=[b,a^{\varphi}]^{{\alpha}_1} [c,a^{\varphi}]^{{\alpha}_2} [c,b^{\varphi}]^{{\alpha}_3} [d,a^{\varphi}]^{{\alpha}_4} {\tilde{w}} \in \mathcal{M}^{*}(G_{20})$, then $1={\kappa}^{*}(w)=c^{{\alpha}_1}d^{{\alpha}_2}e^{{\alpha}_3}f^{{\alpha}_4}$. Since $c,d,e,f$ are in the polycyclic generating sequence and $exp(G_{20})=p$, we have $c^{{\alpha}_1}=d^{{\alpha}_2}=e^{{\alpha}_3}=f^{{\alpha}_4}=1$ and $p$ divides ${\alpha}_1,{\alpha}_2,{\alpha}_3$ and ${\alpha}_4$, respectively. Now using Lemmas \ref{l2.3} and  \ref{l2.4}, we have
$$1=[c^p,b^{\varphi}]=[c,b^{\varphi}]^{p}[c,b^{\varphi},c]^{{\binom{p}{2}}}[c,b^{\varphi},c,c]^{{\binom{p}{3}}}.$$ 
Since $$[c,b^{\varphi},c]^{{\binom{p}{2}}}=[c,b,c^{\varphi}]^{{\binom{p}{2}}}=[e,c^{\varphi}]^{{\binom{p}{2}}}=[e^{{\binom{p}{2}}},c^{\varphi}]=1$$
and
$$[c,b^{\varphi},c,c]^{{\binom{p}{3}}}=[c,b,c,c^{\varphi}]^{{\binom{p}{3}}}=[e,c,c^{\varphi}]^{{\binom{p}{3}}}=[1,c^{\varphi}]^{{\binom{p}{3}}}=1,$$ $ [c,b^{\varphi}]^{p}=1$. Similarly, $[b,a^{\varphi}]^{p}=[c,a^{\varphi}]^{p}=[d,a^{\varphi}]^{p}=1$. Thus $w=\tilde{w}$. Hence $\mathcal{M}^{*}(G_{20}) \subseteq  {\mathcal{M}_0}^{*}(G_{20})$ and $\tilde{B_0}(G_{20})=0$.
\\
\\
Now let $G\cong G_{190}$. By using Proposition \ref{p2.6}, the group $[G_{190},G_{190}^{\varphi}]$ is generated by $[b,a^{\varphi}] ,[c,a^{\varphi}] , [d,a^{\varphi}] , [e,a^{\varphi}],$ and $[f,a^{\varphi}] $ modulo ${\mathcal{M}_0}^{*}(G_{190})$. Lemma \ref{l2.3} (vi) implies $$[[b,a^{\varphi}],[c,a^{\varphi}]]=[c,d^{\varphi}]\in {\mathcal{M}_0}^{*}(G_{190}).$$ Also
$$[[b,a^{\varphi}],[d,a^{\varphi}]]=[c,e^{\varphi}]\in {\mathcal{M}_0}^{*}(G_{190}).$$
Similarly,
\begin{align*}[[b,a^{\varphi}],[e,a^{\varphi}]]&,[[b,a^{\varphi}],[f,a^{\varphi}]],[[c,a^{\varphi}],[d,a^{\varphi}]],[[c,a^{\varphi}],[e,a^{\varphi}]],[[c,a^{\varphi}],[f,a^{\varphi}]],\\
&[[d,a^{\varphi}],[e,a^{\varphi}]],[[d,a^{\varphi}],[f,a^{\varphi}]],[[e,a^{\varphi}],[f,a^{\varphi}]]\in {\mathcal{M}_0}^{*}(G_{190}).
\end{align*}
Thus any two elements of generating set of $[G_{190},G_{190}^{\varphi}]$ are commuting modulo ${\mathcal{M}_0}^{*}(G_{190})$. So each element of $[G_{190},G_{190}^{\varphi}]$ can be written as
$$[b,a^{\varphi}]^{{\alpha}_1} [c,a^{\varphi}]^{{\alpha}_2} [d,a^{\varphi}]^{{\alpha}_3} [e,a^{\varphi}]^{{\alpha}_4}[f,a^{\varphi}]^{{\alpha}_5}\tilde{w},$$
where $\tilde{w}\in {\mathcal{M}_0}^{*}(G_{190})$, and $1\leq i\leq 5$, ${{\alpha}_i}\in {\Bbb{Z}}$. \\ Let $w={[b,a^{\varphi}]^{{\alpha}_1} [c,a^{\varphi}]^{{\alpha}_2} [d,a^{\varphi}]^{{\alpha}_3} [e,a^{\varphi}]^{{\alpha}_4}[f,a^{\varphi}]^{{\alpha}_5}}\tilde{w} \in \mathcal{M}^{*}(G_{190})$. Then \\ $1={\kappa}^{*}(w)=c^{{\alpha}_1}d^{{\alpha}_2}e^{{\alpha}_3}f^{{\alpha}_4}g^{{\alpha}_5}$. Since $c,d,e,f,g$ belong to the polycyclic generating sequence and $exp(G_{190})=p$, we obtain $c^{{\alpha}_1}=d^{{\alpha}_2}=e^{{\alpha}_3}=f^{{\alpha}_4}=g^{{\alpha}_5}=1$ and $p$ divides ${\alpha}_1,{\alpha}_2,{\alpha}_3,{\alpha}_4$ and ${\alpha}_5$, respectively. Now  using Lemmas \ref{l2.3} and  \ref{l2.4}, we have
\begin{align*}1=[b^p,a^{\varphi}]=&[b,a^{\varphi}]^p[b,a^{\varphi},b]^{\binom{p}{2}}[b,a^{\varphi},b,b]^{\binom{p}{3}}[b,a^{\varphi},b,b,b]^{\binom{p}{4}}[b,a^{\varphi},b,[b,a^{\varphi}]]^{\binom{p}{3}}\\&[b,a^{\varphi},b,b,b,b]^{\binom{p}{5}}[b,a^{\varphi},b,b,[b,a^{\varphi}]]^{\binom{p}{4}+\binom{p}{3}}[b,a^{\varphi},b,[b,a^{\varphi}],b]^{\binom{p}{4}}.\end{align*}
Since $$[b,a^{\varphi},b]^{{\binom{p}{2}}}=[b,a,b^{\varphi}]^{{\binom{p}{2}}}=[c,b^{\varphi}]^{{\binom{p}{2}}}=[c^{{\binom{p}{2}}},b^{\varphi}]=1,$$  $$[b,a^{\varphi},b,b]^{{\binom{p}{3}}}=[b,a,b,b^{\varphi}]^{{\binom{p}{3}}}=[c,b,b^{\varphi}]^{{\binom{p}{3}}}=[1,b^{\varphi}]^{{\binom{p}{3}}}=1,$$ 
and
\begin{align*}[b,a^{\varphi},b,b,b]^{\binom{p}{4}}&=[b,a^{\varphi},b,[b,a^{\varphi}]]^{\binom{p}{3}}=[b,a^{\varphi},b,b,b,b]^{\binom{p}{5}}\\&=[b,a^{\varphi},b,b,[b,a^{\varphi}]]^{\binom{p}{4}+\binom{p}{3}}=[b,a^{\varphi},b,[b,a^{\varphi}],b]^{\binom{p}{4}}=1,
\end{align*}
we have $ [b,a^{\varphi}]^{p}=1$. Similarly $[c,a^{\varphi}]^{p}=[d,a^{\varphi}]^{p}=[e,a^{\varphi}]^{p}=[f,a^{\varphi}]^p=1$. \\ So, $\mathcal{M}^{*}(G_{190})\subseteq {\mathcal{M}_0}^{*}(G_{190})$. Hence $\tilde{B_0}(G_{190})=0$.

\end{proof}
\begin{proposition}\label{p3.3}
The following groups, have trivial Bogomolov multiplier.
\begin{enumerate}[]
\item{$G_{6}: <a,b,c,d,e,f,g \ | \  [b,a]=e , [d,c]=e>$}.
\item{$G_{7}: <a,b,c,d,e,f,g \ | \  [b,a]=c , [c,a]=[d,b]=e>$}.
\item{$G_{12}: <a,b,c,d,e,f,g \ | \  [b,a]=e , [c,a]=f , [d,c]=e>$}.
\item{$G_{16}: <a,b,c,d,e,f,g \ | \  [b,a]=c , [c,a]=e , [c,b]=f , [d,b]=f>$}.
\item{$G_{21}: <a,b,c,d,e,f,g \ | \  [b,a]=c , [c,a]=f , [e,d]=f>$}.
\item{$G_{22}: <a,b,c,d,e,f,g \ | \  [b,a]=c , [c,a]=d , [d,a]=f , [e,b]=f>$}.
\item{$G_{24}: <a,b,c,d,e,f,g \ | \  [b,a]=d , [c,a]=e , [d,b]=f , [e,c]=f>$}.
\item{$G_{25}: <a,b,c,d,e,f,g \ | \  [b,a]=d , [c,a]=e , [d,b]=f , [e,c]=f^t>$}.
\item{$G_{26}: <a,b,c,d,e,f,g \ | \  [b,a]=d , [c,a]=e , [d,b]=f , [e,a]=f>$}.
\item{$G_{28}: <a,b,c,d,e,f,g \ | \  [b,a]=c , [c,a]=d , [c,b]=e , [d,a]=f , [e,b]=f>$}.
\item{$G_{32}: <a,b,c,d,e,f,g \ | \  [b,a]=c , [c,a]=d , [d,a]=e , [d,b]=f , [e,b]=f , [d,c]=f^{-1}>$}.
\item{$G_{36}: <a,b,c,d,e,f,g \ | \  [b,a]=e , [c,a]=f , [d,b]=f , [d,c]=g>$}.
\item{$G_{37}: <a,b,c,d,e,f,g \ | \  [b,a]=e , [c,a]=f , [c,b]=g , [d,b]=f>$}.
\item{$G_{43}: <a,b,c,d,e,f,g \ | \  [b,a]=f , [d,c]=g , [e,d]=f>$}.
\item{$G_{46}: <a,b,c,d,e,f,g \ | \  [b,a]=c , [c,a]=f , [d,b]=f , [e,d]=g>$}.
\item{$G_{47}: <a,b,c,d,e,f,g \ | \  [b,a]=c , [c,a]=f , [d,b]=g , [e,d]=f>$}.
\item{$G_{48}: <a,b,c,d,e,f,g \ | \  [b,a]=c , [c,a]=f , [d,b]=g , [e,b]=f>$}.
\item{$G_{49}: <a,b,c,d,e,f,g \ | \  [b,a]=c , [c,a]=f , [d,a]=g , [e,d]=f>$}.
\item{$G_{50}: <a,b,c,d,e,f,g \ | \  [b,a]=c , [c,a]=f , [d,a]=g , [e,b]=f>$}.
\item{$G_{52}: <a,b,c,d,e,f,g \ | \  [b,a]=c , [c,a]=f , [d,a]=g , [e,b]=g>$}.
\item{$G_{54}: <a,b,c,d,e,f,g \ | \  [b,a]=c , [c,a]=f , [c,b]=g , [e,d]=f>$}.
\item{$G_{55}: <a,b,c,d,e,f,g \ | \  [b,a]=c , [c,a]=f , [c,b]=g , [d,b]=f , [e,d]=g>$}.
\item{$G_{57}: <a,b,c,d,e,f,g \ | \  [b,a]=c , [c,a]=f , [c,b]=g , [d,a]=g , [d,b]=f , [e,b]=g>$}.
\item{$G_{59}: <a,b,c,d,e,f,g \ | \  [b,a]=c , [c,a]=d , [c,b]=g , [d,a]=f , [e,b]=f>$}.
\item{$G_{62}: <a,b,c,d,e,f,g \ | \  [b,a]=c , [c,a]=d , [c,b]=g , [d,a]=f , [e,b]=g>$}.
\item{$G_{71}: <a,b,c,d,e,f,g \ | \  [b,a]=d , [c,a]=e , [c,b]=g , [d,b]=f , [e,c]=f>$}.
\item{$G_{73}: <a,b,c,d,e,f,g \ | \  [b,a]=d , [c,a]=e , [d,a]=f , [d,b]=g , [e,c]=f>$}.
\item{$G_{74}: <a,b,c,d,e,f,g \ | \  [b,a]=d , [c,a]=e , [c,b]=g , [d,a]=f , [e,c]=f>$}.
\item{$G_{76}: <a,b,c,d,e,f,g \ | \  [b,a]=d , [c,a]=e , [d,a]=f , [d,b]=g , [e,c]=g>$}.
\item{$G_{81}: <a,b,c,d,e,f,g \ | \  [b,a]=d , [c,a]=e , [d,a]=f , [e,a]=f , [e,c]=g , d,b]=g^{-1}>$}.
\item{$G_{89}: <a,b,c,d,e,f,g \ | \  [b,a]=d , [c,a]=e , [d,c]=f , [e,b]=f , [e,c]=g>$}.
\item{$G_{91}: <a,b,c,d,e,f,g \ | \  [b,a]=d , [c,a]=e , [d,b]=g , [d,c]=f , [e,a]=f , [e,b]=f>$}.
\item{$G_{100}: <a,b,c,d,e,f,g \ | \  [b,a]=c , [c,a]=e , [c,b]=g , [d,c]=f , [e,a]=f , [d,b]=eg>$}.
\item{$G_{105}: <a,b,c,d,e,f,g \ | \  [b,a]=c , [c,a]=d , [d,a]=e , [d,b]=f , [e,a]=g , [e,b]=f , [d,c]=f^{-1}>$}.
\item{$G_{110}: <a,b,c,d,e,f,g \ | \  [b,a]=g , [d,c]=g , [f,e]=g >$}.
\item{$G_{111}: <a,b,c,d,e,f,g \ | \  [b,a]=c , [c,a]=g , [d,b]=g , [f,e]=g>$}.
\item{$G_{112}: <a,b,c,d,e,f,g \ | \  [b,a]=c , [c,a]=d , [d,a]=g , [f,e]=g>$}.
\item{$G_{114}: <a,b,c,d,e,f,g \ | \  [b,a]=d , [c,a]=e , [d,a]=g , [e,c]=g , [f,b]=g>$}.
\item{$G_{117}: <a,b,c,d,e,f,g \ | \  [b,a]=d , [c,a]=e , [d,c]=g , [e,b]=g , [f,a]=g>$}.
\item{$G_{118}: <a,b,c,d,e,f,g \ | \  [b,a]=c , [c,a]=e , [d,b]=e , [d,c]=g , [e,a]=g , [f,d]=g>$}.
\item{$G_{120}: <a,b,c,d,e,f,g \ | \  [b,a]=c , [c,a]=e , [d,b]=e , [d,c]=g , [e,a]=g , [f,b]=g , [f,d]=g>$}.
\item{$G_{121}: <a,b,c,d,e,f,g \ | \  [b,a]=c , [c,a]=d , [d,a]=e , [e,a]=g , [f,b]=g>$}.
\item{$G_{122}: <a,b,c,d,e,f,g \ | \  [b,a]=c , [c,a]=d , [c,b]=g , [d,a]=e , [e,b]=g , [f,b]=g>$}.
\item{$G_{123}: <a,b,c,d,e,f,g \ | \  [b,a]=c , [c,a]=d , [d,a]=e , [d,b]=g , [e,b]=g , [f,a]=g , [d,c]=g^{-1}>$}.
\item{$G_{131}: <a,b,c,d,e,f,g \ | \  [b,a]=e , [d,c]=f , [e,a]=g , [f,c]=g>$}.
\item{$G_{132}: <a,b,c,d,e,f,g \ | \  [b,a]=e , [d,b]=g , [d,c]=f , [e,a]=g , [f,c]=g>$}.
\item{$G_{140}: <a,b,c,d,e,f,g \ | \  [b,a]=c , [c,a]=e , [d,b]=f , [e,a]=g , [f,d]=g>$}.

\item{$G_{141}: <a,b,c,d,e,f,g \ | \  [b,a]=c , [c,a]=e , [c,b]=g , [d,b]=f , [e,a]=g>$}.
\item{$G_{142}: <a,b,c,d,e,f,g \ | \  [b,a]=c , [c,a]=e , [d,b]=f , [e,a]=g , [f,b]=g>$}.
\item{$G_{143}: <a,b,c,d,e,f,g \ | \  [b,a]=c , [c,a]=e , [d,b]=f , [d,c]=g , [e,a]=g , [f,a]=g , [f,b]=g>$}.
\item{$G_{144}: <a,b,c,d,e,f,g \ | \  [b,a]=c , [c,a]=e , [c,b]=f , [d,b]=f , [e,a]=g , [f,b]=g>$}.
\item{$G_{149}: <a,b,c,d,e,f,g \ | \  [b,a]=c , [c,a]=e , [d,a]=f , [e,a]=g , [f,d]=g>$}.
\item{$G_{152}: <a,b,c,d,e,f,g \ | \  [b,a]=c , [c,a]=e , [d,a]=f , [e,a]=g , [f,b]=g , [d,c]=g^{-1}>$}.
\item{$G_{162}: <a,b,c,d,e,f,g \ | \  [b,a]=c , [c,a]=d , [c,b]=e , [d,a]=f , [e,b]=g , [f,a]=g>$}.
\item{$G_{163}: <a,b,c,d,e,f,g \ | \  [b,a]=c , [c,a]=d , [c,b]=e , [d,a]=f , [f,b]=g , [d,c]=g^{-1} , [e,a]=g^{-1}>$}.
\item{$G_{164}: <a,b,c,d,e,f,g \ | \  [b,a]=c , [c,a]=d , [c,b]=e , [d,a]=f , [e,b]=g , [f,b]=g , [d,c]=g^{-1} , [e,a]=g^{-1}>$}.
\item{$G_{168}: <a,b,c,d,e,f,g \ | \  [b,a]=c , [c,a]=d , [d,a]=f , [d,b]=g , [e,a]=g , [e,b]=f , [f,b]=g , [d,c]=g^{-1}>$}.
\end{enumerate}
\end{proposition}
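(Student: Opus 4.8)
The plan is to follow the template of Proposition \ref{p3.2}, the goal again being to prove $\mathcal{M}^{*}(G)\subseteq{\mathcal{M}_0}^{*}(G)$, and hence $\tilde{B_0}(G)=0$, for each group $G$ in the list. As before, Proposition \ref{p2.6} shows that, modulo ${\mathcal{M}_0}^{*}(G)$, the group $[G,G^{\varphi}]$ is generated by those $[x_i,x_j^{\varphi}]$ for which $[x_i,x_j]\neq 1$, every $\varphi$-commutator of a commuting pair lying in ${\mathcal{M}_0}^{*}(G)$ by definition. What sets these groups apart from those of Proposition \ref{p3.2} is that ${\kappa}^{*}$ is no longer injective on this generating set: several distinct defining commutators now coincide with one generator, or with a power of it, as in $G_6$, where $[b,a]=[d,c]=e$, or in $G_{110}$, where $[b,a]=[d,c]=[f,e]=g$. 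Consequently the relation ${\kappa}^{*}(w)=1$ yields only a congruence among the exponents of the coincident generators, not a divisibility condition on each separately, and the heart of the argument is to remove this ambiguity.

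For this I would, for each coincidence, exhibit a commuting pair whose $\varphi$-commutator lies in ${\mathcal{M}_0}^{*}(G)$ and, when expanded through the exterior pairing relations of Proposition \ref{p2.2}, equals the pertinent ratio of generators up to further $\varphi$-commutators of commuting pairs. For $G_6$ one checks $[bd,ac^{-1}]=[b,a][b,c]^{-1}[d,a][d,c]^{-1}=1$, so $[bd,(ac^{-1})^{\varphi}]\in{\mathcal{M}_0}^{*}(G_6)$; expanding it produces $[b,a^{\varphi}][d,c^{\varphi}]^{-1}$ together with the factors $[b,c^{\varphi}]^{\pm 1}$ and $[d,a^{\varphi}]^{\pm 1}$, all of which lie in ${\mathcal{M}_0}^{*}(G_6)$ because $[b,c]=[d,a]=1$. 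Hence $[b,a^{\varphi}]\equiv[d,c^{\varphi}]\pmod{{\mathcal{M}_0}^{*}(G_6)}$, and the two coincident generators become a single generator in the quotient.

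It then remains, for the class-two and class-three groups, to reprise the two remaining steps of Proposition \ref{p3.2}. Using Lemma \ref{l2.3}(iii) in the class-two case, and Lemma \ref{l2.3}(vi) together with the vanishing of the relevant double commutators in the class-three case, I would show that the generators commute modulo ${\mathcal{M}_0}^{*}(G)$, so that every element of $[G,G^{\varphi}]$ admits a normal form as a product of powers of generators times an element of ${\mathcal{M}_0}^{*}(G)$; and by expanding $[x^p,y^{\varphi}]$ with Lemma \ref{l2.4} and collapsing the higher terms via Lemma \ref{l2.3}(v),(vii) and $\exp(G)=p$, I would show each generating $\varphi$-commutator has trivial $p$-th power. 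Combining the coincidence relations with these facts, any $w\in\mathcal{M}^{*}(G)$ collapses modulo ${\mathcal{M}_0}^{*}(G)$ to a product of independent representatives whose ${\kappa}^{*}$-images are distinct generators; the equation ${\kappa}^{*}(w)=1$ then forces each exponent to be divisible by $p$, whence $w\in{\mathcal{M}_0}^{*}(G)$, exactly as for $G_6$ where ${\kappa}^{*}(w)=e^{\alpha_1+\alpha_2}=1$ gives $p\mid\alpha_1+\alpha_2$ and $w\equiv[b,a^{\varphi}]^{\alpha_1+\alpha_2}\equiv 1$.

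The main obstacle I anticipate lies with the groups of higher nilpotency class carrying extra relations among commutators, such as $G_{32},G_{105},G_{123},G_{163},G_{164}$ and $G_{168}$. There the bracket of two generating $\varphi$-commutators need not fall into ${\mathcal{M}_0}^{*}(G)$: for $G_{168}$, Lemma \ref{l2.3}(vi) gives $[[b,a^{\varphi}],[c,a^{\varphi}]]=[c,d^{\varphi}]$ with $[c,d]=g\neq 1$, so the quotient $[G,G^{\varphi}]/{\mathcal{M}_0}^{*}(G)$ is no longer abelian and a plain normal form is unavailable. For these cases one must track the induced nilpotent relations alongside the several simultaneous coincidences and the inverse or parametric exponents (the $f^{t}$ of $G_{25}$, the $f^{-1}$ of $G_{32}$, the sign patterns of $G_{163},G_{164},G_{168}$), choosing each auxiliary commuting pair individually and verifying that, after all reductions, the order of $[G,G^{\varphi}]/{\mathcal{M}_0}^{*}(G)$ still matches $|[G,G]|$, so that ${\kappa}^{*}$ is injective on the quotient and $\tilde{B_0}(G)=0$.
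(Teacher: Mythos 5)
Your proposal follows essentially the same route as the paper's own proof: the paper's detailed treatments of $G_{12}$, $G_{16}$ and $G_{132}$ proceed exactly by your scheme --- pairwise commutation of the generating $\varphi$-commutators modulo ${\mathcal{M}_0}^{*}(G)$ via Lemma \ref{l2.3}(vi), triviality of their $p$-th powers via Lemmas \ref{l2.3} and \ref{l2.4}, and then, for each coincidence among defining commutators, an auxiliary commuting pair (the paper uses $[cab,cad]=1$ in $G_{12}$, $[db,bc]=1$ in $G_{16}$, $[dc,bf]=[ec,af]=1$ in $G_{132}$, playing the role of your $[bd,ac^{-1}]=1$ in $G_6$) whose expansion shows the ratio of the coincident generators lies in ${\mathcal{M}_0}^{*}(G)$. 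Your flagged obstacle for the higher-class groups with extra relations is genuine --- for $G_{168}$ one indeed has $[[b,a^{\varphi}],[c,a^{\varphi}]]=[c,d^{\varphi}]$ with $[c,d]=g\neq 1$, so that $[c,d^{\varphi}]\notin\ker{\kappa}^{*}\supseteq{\mathcal{M}_0}^{*}(G)$ and the quotient is not abelian --- but the paper does not address this either, dismissing all unworked cases with ``the remaining case can be proved in similar way,'' so your proposal is neither more nor less complete than the printed proof on this point.
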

\begin{proof}
We state the proof in details to $G_{12}$, $G_{16}$ and $G_{132}$, the remaining case can be proved in similar way. 
\\
Let $G\cong G_{12}$. We can see that $[G_{12},G_{12}^{\varphi}]$ is generated by $[b,a^{\varphi}] ,[c,a^{\varphi}] , [d,c^{\varphi}]$ modulo ${\mathcal{M}_0}^{*}(G_{12})$. Using Lemma \ref{l2.3}  (vi), we have $$[[b,a^{\varphi}],[c,a^{\varphi}]]=[[b,a],[c,a]^{\varphi}]=[e,f^{\varphi}]\in {\mathcal{M}_0}^{*}(G_{12})$$
 and
$$[[b,a^{\varphi}],[d,c^{\varphi}]]=[[b,a],[d,c]^{\varphi}]=[e,e^{\varphi}]\in {\mathcal{M}_0}^{*}(G_{12}).$$
Also
$$[[c,a^{\varphi}],[d,c^{\varphi}]]=[[c,a],[d,c]^{\varphi}]=[f,e^{\varphi}]\in {\mathcal{M}_0}^{*}(G_{12}).$$
Hence any two elements of the generator set of $[G_{12},G_{12}^{\varphi}]$ are commuting modulo ${\mathcal{M}_0}^{*}(G_{12})$. By using Proposition \ref{p2.6}, each element of $[G_{12},G_{12}^{\varphi}]$ can be written as 
$$[b,a^{\varphi}]^{{\alpha}_1} [c,a^{\varphi}]^{{\alpha}_2} [d,c^{\varphi}]^{{\alpha}_3} {\tilde{w}},$$ where ${\tilde{w}}\in {\mathcal{M}_0}^{*}(G_{12})$, and for all $i$ such that, $1\leq i\leq 3$, ${{\alpha}_i}\in {\Bbb{Z}}$. Let $w=[b,a^{\varphi}]^{{\alpha}_1} [c,a^{\varphi}]^{{\alpha}_2} [d,c^{\varphi}]^{{\alpha}_3}{\tilde{w}} \in \mathcal{M}^{*}(G_{12})$. Then $1={\kappa}^{*}(w)=e^{{\alpha}_1}f^{{\alpha}_2}e^{{\alpha}_3}=e^{{\alpha}_{1}+{\alpha}_{3}}f^{{\alpha}_2}$. Since $e$ and $f$ are  elements of polycyclic generating sequence and $exp(G_{12})=p$, we have $f^{{\alpha}_2}=e^{{{\alpha}_1}+{\alpha}_3}=1$ and $p$ divides ${\alpha}_2$ and $({\alpha}_1+{\alpha}_3)$, respectively. So, there is two integers, $k$ and $k'$ such that ${\alpha}_2=kp$ and ${\alpha}_1+{\alpha}_3=k'p$. Thus 
\begin{align*}w&=[b,a^{\varphi}]^{{{\alpha}_1}} [c,a^{\varphi}]^{{\alpha}_2}[d,c^{\varphi}]^{k'p-{\alpha}_1}{\tilde{w}}\\&={([b,a^{\varphi}][d,c^{\varphi}]^{-1})}^{{\alpha}_1}[d,c^{\varphi}]^{k'p} [c,a^{\varphi}]^{kp}{\tilde{w}}\\
&={([b,a^{\varphi}][d,c^{\varphi}]^{-1})}^{{\alpha}_1}{\tilde{w}}.
\end{align*}
We know that $cl(G_{12})=2$. So, Lemmas \ref{l2.3} and \ref{l2.4} imply that
$$1=[c^p,a^{\varphi}]=[c,a^{\varphi}]^{p} \ \ \ \ , \ \ \ \ 1=[d^p,c^{\varphi}]=[d,c^{\varphi}]^{p}.$$ 
Thus $w=([b,a^{\varphi}][d,c^{\varphi}]^{-1})^{{\alpha}_1}{\tilde{w}}$. We claim that $[b,a^{\varphi}][d,c^{\varphi}]^{-1}\in {\mathcal{M}_0}^{*}(G_{12})$, and so the result follows. At first we use Lemma \ref{l2.3} (i) to check that $[cab,cad]=1$. Indeed,
\begin{align*}[cab,cad]&=[ca,cad][ca,cad,b][b,cad]\\&=[ca,d][ca,ca][[ca,ca],d][b,d][b,ca][b,ca,d]\\&=[c,d][c,d,a][a,d][b,d][b,a][b,c][b,c,a][[b,a][b,c][b,c,a],d]=e^{-1}e=1.
\end{align*}
Thus $[cab,(cad)^{\varphi}]=[cab,c^{\varphi}a^{\varphi}d^{\varphi}]\in {\mathcal{M}_0}^{*}(G_{12})$. Expanding it, we obtain that 
\begin{align*}[cab,c^{\varphi}a^{\varphi}d^{\varphi}]&=[c,d^{\varphi}][c,d^{\varphi},a][a,d^{\varphi}][b,d^{\varphi}][b,a^{\varphi}][b,c^{\varphi}]\\&[b,c^{\varphi},a][[b,a^{\varphi}][b,c^{\varphi}][b,c^{\varphi},a],d^{\varphi}].
\end{align*}
We can see that, all of the above commutators except $[c,d^{\varphi}]$ and $[b,a^{\varphi}]$, belong to ${\mathcal{M}_0}^{*}(G_{12})$. So modulo ${\mathcal{M}_0}^{*}(G_{12})$,
$[c,d^{\varphi}][b,a^{\varphi}]=[b,a^{\varphi}][c,d^{\varphi}]=[b,a^{\varphi}][d,c^{\varphi}]^{-1}$, and $[b,a^{\varphi}][d,c^{\varphi}]^{-1}\in {\mathcal{M}_0}^{*}(G_{12})$, as required. Hence $\tilde{B_0}(G_{12})=0$. 
\\
\\
Let $G\cong G_{16}$. We can see that $[G_{16},G_{16}^{\varphi}]$ is generated by $[b,a^{\varphi}] ,[c,a^{\varphi}] , [d,b^{\varphi}] , [c,b^{\varphi}] $ modulo ${\mathcal{M}_0}^{*}(G_{16})$. Using Lemma \ref{l2.3}  (vi), we have $$[[b,a^{\varphi}],[c,a^{\varphi}]]=[[b,a],[c,a]^{\varphi}]=[c,e^{\varphi}]\in {\mathcal{M}_0}^{*}(G_{16})$$
 and
$$[[b,a^{\varphi}],[d,b^{\varphi}]]=[[b,a],[d,b]^{\varphi}]=[c,f^{\varphi}]\in {\mathcal{M}_0}^{*}(G_{16}).$$
Similarly,
$$[[b,a^{\varphi}],[c,b^{\varphi}]], [[c,a^{\varphi}],[d,b^{\varphi}]], [[c,a^{\varphi}],[c,b^{\varphi}]], [[d,b^{\varphi}],[c,b^{\varphi}]]\in {\mathcal{M}_0}^{*}(G_{16}).$$
Hence any two elements of the generator set of $[G_{16},G_{16}^{\varphi}]$ are commuting modulo ${\mathcal{M}_0}^{*}(G_{16})$. By using Proposition \ref{p2.6}, each element of $[G_{16},G_{16}^{\varphi}]$ can be written as 
$$[b,a^{\varphi}]^{{\alpha}_1} [c,a^{\varphi}]^{{\alpha}_2} [d,b^{\varphi}]^{{\alpha}_3} [c,b^{\varphi}]^{{\alpha}_4} {\tilde{w}},$$ where ${\tilde{w}}\in {\mathcal{M}_0}^{*}(G_{16})$, and for all $i$ such that, $1\leq i\leq 4$, ${{\alpha}_i}\in {\Bbb{Z}}$. Let $w=[b,a^{\varphi}]^{{\alpha}_1} [c,a^{\varphi}]^{{\alpha}_2} [d,b^{\varphi}]^{{\alpha}_3} [c,b^{\varphi}]^{{\alpha}_4} {\tilde{w}} \in \mathcal{M}^{*}(G_{16})$. Then $1={\kappa}^{*}(w)=c^{{\alpha}_1}e^{{\alpha}_2}f^{{\alpha}_3}f^{{\alpha}_4}=c^{{\alpha}_1}e^{{\alpha}_2}f^{{\alpha}_{3}+{\alpha}_{4}}$. Since $c$, $e$ and $f$ are  elements of polycyclic generating sequence and $exp(G_{16})=p$, we have $c^{{\alpha}_1}=e^{{\alpha}_2}=f^{{{\alpha}_3}+{\alpha}_4}=1$ and hence $p$ divides ${\alpha}_1,{\alpha}_2,$ and $({\alpha}_3+{\alpha}_4)$, respectively.
We know $cl(G_{16})=3$, so by using Lemmas \ref{l2.3} and \ref{l2.4}, we have
$$1=[c^p,b^{\varphi}]=[c,b^{\varphi}]^{p}[c,b^{\varphi},c]^{{\binom{p}{2}}}.$$ 
Since $[c,b^{\varphi},c]^{{\binom{p}{2}}}=[c,b,c^{\varphi}]^{{\binom{p}{2}}}=[f,c^{\varphi}]^{{\binom{p}{2}}}=[f^{{\binom{p}{2}}},c^{\varphi}]=1$, $[c,b^{\varphi}]^p=1$. Also
$$1=[b^p,a^{\varphi}]=[b,a^{\varphi}]^{p}[b,a^{\varphi},b]^{{\binom{p}{2}}},$$ and
$$[b,a^{\varphi},b]^{{\binom{p}{2}}}=[b,a,b^{\varphi}]^{{\binom{p}{2}}}=[c,b^{\varphi}]^{{\binom{p}{2}}}=[c^{{{\binom{p}{2}}}},b^{\varphi}]=1.$$
Thus $[b,a^{\varphi}]^{p}=1$. Similarly $[c,a^{\varphi}]^{p}=1$, and so $w=([d,b^{\varphi}][c,b^{\varphi}]^{-1})^{{\alpha}_3}{\tilde{w}}$. We claim that $[d,b^{\varphi}][c,b^{\varphi}]^{-1}\in {\mathcal{M}_0}^{*}(G_{16})$, and so the result follows. Using Lemma \ref{l2.3} (i) we show that $[db,bc]=1$. Indeed,
\begin{align*}[db,bc]&=[db,c][db,b][db,b,c]\\&=[d,c][d,c,b][b,c][d,b][d,b,b][b,b][[d,b][d,b,b][b,b],c]=f^{-1}f=1.
\end{align*}
Thus $[db,(bc)^{\varphi}]\in {\mathcal{M}_0}^{*}(G_{16})$. Expanding it, we have
\begin{align*}[db,(bc)^{\varphi}]&=[d,c^{\varphi}][d,c^{\varphi},b][b,c^{\varphi}][d,b^{\varphi}][d,b^{\varphi},b][b,b^{\varphi}][d,b^{\varphi},c^{\varphi}]\\&\qquad[d,b^{\varphi},b,c^{\varphi}][b,b^{\varphi},c^{\varphi}]\\&=[d,c^{\varphi}][d,c^{\varphi},b][b,c^{\varphi}][d,b^{\varphi}][d,b^{\varphi},b][b,b^{\varphi}][d,b,c^{\varphi}]\\&\qquad
[d,b,b,c^{\varphi}][b,b,c^{\varphi}].
\end{align*}
It is easy to see that, all of the above commutators except $[b,c^{\varphi}]$ and $[d,b^{\varphi}]$, belong to ${\mathcal{M}_0}^{*}(G_{16})$. So modulo ${\mathcal{M}_0}^{*}(G_{16})$,
$[b,c^{\varphi}][d,b^{\varphi}]=[d,b^{\varphi}][b,c^{\varphi}]=[d,b^{\varphi}][c,b^{\varphi}]^{-1}$, and $[d,b^{\varphi}][c,b^{\varphi}]^{-1}\in {\mathcal{M}_0}^{*}(G_{16})$, as required. Hence $\tilde{B_0}(G_{16})=0$. 
\\
\\
Now let $G\cong G_{132}$. Using Proposition \ref{p2.6}, the group $[G_{132},G_{132}^{\varphi}]$ is generated by $[b,a^{\varphi}] ,[d,b^{\varphi}] , [d,c^{\varphi}] , [e,a^{\varphi}]$, $[f,c^{\varphi}]$ modulo ${\mathcal{M}_0}^{*}(G_{132})$. By using Lemma \ref{l2.3} (vi), we have
$$[[b,a^{\varphi}],[d,b^{\varphi}]]=[[b,a],[d,b]^{\varphi}]=[e,g^{\varphi}]\in {\mathcal{M}_0}^{*}(G_{132}).$$
Similarly,
$$[[b,a^{\varphi}],[d,c^{\varphi}]], [[b,a^{\varphi}],[e,a^{\varphi}]], [[b,a^{\varphi}],[f,c^{\varphi}]], [[d,b^{\varphi}],[d,c^{\varphi}]], [[d,b^{\varphi}],[e,a^{\varphi}]], $$
$$[[d,b^{\varphi}],[f,c^{\varphi}]], [[d,c^{\varphi}],[e,a^{\varphi}]], [[d,c^{\varphi}],[f,c^{\varphi}]], [[e,a^{\varphi}],[f,c^{\varphi}]]\in {\mathcal{M}_0}^{*}(G_{132}).$$
Thus any two elements of the generating set of $[G_{132},G_{132}^{\varphi}]$ are commuting modulo ${\mathcal{M}_0}^{*}(G_{132})$, and each element of $[G_{132},G_{132}^{\varphi}]$ can be written as
$$[b,a^{\varphi}]^{{\alpha}_1} [d,b^{\varphi}]^{{\alpha}_2} [d,c^{\varphi}]^{{\alpha}_3} [e,a^{\varphi}]^{{\alpha}_4}[f,c^{\varphi}]^{{\alpha}_5} {\tilde{w}},$$
where ${\tilde{w}}\in {\mathcal{M}_0}^{*}(G_{132})$, and for all $i$ such that, $1\leq i\leq 5$, ${{\alpha}_i}\in {\Bbb{Z}}$. \\
 Let $w=[b,a^{\varphi}]^{{\alpha}_1} [d,b^{\varphi}]^{{\alpha}_2} [d,c^{\varphi}]^{{\alpha}_3} [e,a^{\varphi}]^{{\alpha}_4}[f,c^{\varphi}]^{{\alpha}_5} {\tilde{w}} \in \mathcal{M}^{*}(G_{132})$. Then\\ $1={\kappa}^{*}(w)=e^{{\alpha}_1}g^{{\alpha}_2}f^{{\alpha}_3}g^{{\alpha}_4}g^{{\alpha}_5}=c^{{\alpha}_1}f^{{\alpha}_{3}}g^{{\alpha}_{2}+{\alpha}_{4}+{\alpha}_{5}}$. Since $c$, $f$ and $g$ are in the polycyclic generating sequence and $\exp(G_{132})=p$,  $c^{{\alpha}_1}=f^{{\alpha}_3}=g^{{\alpha}_{2}+{\alpha}_{4}+{\alpha}_{5}}=1$, so $p$ divides ${\alpha}_1$, ${\alpha}_3$ and $({\alpha}_2+{\alpha}_4+{\alpha}_5)$, respectively. Now Lemmas \ref{l2.3} (ii) and \ref{l2.4} imply that
$$1=[b^p,a^{\varphi}]=[b,a^{\varphi}]^p[b,a^{\varphi},b]^{\binom{p}{2}}.$$
Since $$[b,a^{\varphi},b]^{{\binom{p}{2}}}=[b,a,b^{\varphi}]^{{\binom{p}{2}}}=[e,b^{\varphi}]^{{\binom{p}{2}}}=[e^{{\binom{p}{2}}},b^{\varphi}]=1,$$  
we have $ [b,a^{\varphi}]^{p}=1$. Similarly $[d,c^{\varphi}]^p=1$. Thus \\$w=([d,b^{\varphi}][f,c^{\varphi}]^{-1})^{{\alpha}_2}([e,a^{\alpha}][f,c^{\alpha}]^{-1})^{{\alpha}_4}{\tilde{w}}$ for any $w\in \mathcal{M}^{*}(G_{132})$.\\ If $([d,b^{\varphi}][f,c^{\varphi}]^{-1})$ and $([e,a^{\varphi}][f,c^{\varphi}]^{-1})\in {\mathcal{M}_0}^{*}(G_{132})$, then the result is obtained. \\ We use Lemma \ref{l2.3} (i) to prove that $[dc,bf]=[ec,af]=1$. Expanding the commutators, we have
\begin{align*}[dc,bf]&=[dc,f][dc,b][dc,b,f]\\&=[d,f][d,f,c][c,f][d,b][d,b,c][c,b][[d,b][d,b,c][c,b],f]=g^{-1}g=1.\\
[ec,af]&=[ec,f][ec,a][ec,a,f]\\&=[e,f][e,f,c][c,f][e,a][e,a,c][c,a][[e,a][e,a,c][c,a],f]=g^{-1}g=1.
\end{align*}
Thus $[dc,(bf)^{\varphi}]$ and $[ec,(af)^{\varphi}]\in {\mathcal{M}_0}^{*}(G_{132})$. Expanding $[dc,(bf)^{\varphi}]$, obtain that
\begin{align*}[dc,(bf)^{\varphi}]&=[d,f^{\varphi}][d,f^{\varphi},c][c,f^{\varphi}][d,b^{\varphi}][d,b^{\varphi},c][c,b^{\varphi}][d,b^{\varphi},f^{\varphi}][d,b^{\varphi},c,f^{\varphi}][c,b^{\varphi},f^{\varphi}]\\&=[d,f^{\varphi}][d,f,c^{\varphi}][c,f^{\varphi}][d,b^{\varphi}][d,b,c^{\varphi},b][c,b^{\varphi}][d,b,f^{\varphi}][d,b,c,f^{\varphi}][c,b,f^{\varphi}].
\end{align*}
We can see that all of the above commutators except $[c,f^{\varphi}]$ and $[d,b^{\varphi}]$ belong to ${\mathcal{M}_0}^{*}(G_{132})$. So modulo ${\mathcal{M}_0}^{*}(G_{132})$, $[c,f^{\varphi}][d,b^{\varphi}]=[d,b^{\varphi}][c,f^{\varphi}]=[d,b^{\varphi}][f,c^{\varphi}]^{-1}$, and $[d,b^{\varphi}][f,c^{\varphi}]^{-1}\in {\mathcal{M}_0}^{*}(G_{132})$. Also
\begin{align*}[ec,(af)^{\varphi}]&=[ec,a^{\varphi}f^{\varphi}]=[ec,f^{\varphi}][ec,a^{\varphi}][ec,a^{\varphi},f^{\varphi}]\\&=[e,f^{\varphi}][e,f^{\varphi},c][c,f^{\varphi}][e,a^{\varphi}][e,a^{\varphi},c][c,a^{\varphi}]\\&\qquad[[e,a^{\varphi}][e,a^{\varphi},c][c,a^{\varphi}],f^{\varphi}]\\&=[e,f^{\varphi}][e,f^{\varphi},c][c,f^{\varphi}][e,a^{\varphi}][e,a^{\varphi},c][c,a^{\varphi}]\\&\qquad[e,a^{\varphi},f^{\varphi}][e,a^{\varphi},c,f^{\varphi}][c,a^{\varphi},f^{\varphi}]\\&=[e,f^{\varphi}][e,f,c^{\varphi}][c,f^{\varphi}][e,a^{\varphi}][e,a,c^{\varphi}][c,a^{\varphi}]\\&\qquad[e,a,f^{\varphi}][e,a,c,f^{\varphi}][c,a,f^{\varphi}].
\end{align*}
We can see that, all of the above commutators except $[c,f^{\varphi}]$ and $[e,a^{\varphi}]$ belong to ${\mathcal{M}_0}^{*}(G_{132})$. So modulo ${\mathcal{M}_0}^{*}(G_{132})$, $[c,f^{\varphi}][e,a^{\varphi}]=[e,a^{\varphi}][f,c^{\varphi}]^{-1}$ and $[e,a^{\varphi}][f,c^{\varphi}]^{-1}\in {\mathcal{M}_0}^{*}(G_{132})$, as required. Hence $\tilde{B_0}(G_{132})=0$.

\end{proof}

\section{\bf{Groups of order $p^7$ and exponent $p$ $(p>5)$ with non trivial Bogomolov multiplier}}
In this section, using a similar technique which is used in \cite{4',12}, we will show that the Bogomolov multiplier of some $p$-groups of order $p^7$ and exponent $p$ is non trivial.\\ \\
First in \cite{4'}, Eick and Nickel introduced a useful algorithm for computing a consistent polycyclic presentation of the Schur multiplier and the nonabelian tensor square of a group using consistent polycyclic presentation. Later, Jezernik and Moravec in \cite{12} expanded this method for calculation of the Bogomolov multiplier and curly exterior square of a polycyclic group that has a consistent presentation of the group. \\
Their main tool in this method is to calculate the certain central extensions of a group given by a consistent presentation.
\\  
Let $G$ be a finite polycyclic group defined by a consistent polycyclic presentation $F/R$, where $F$ is the free group on generators $x_i$ $(1\leq i\leq n)$ for some $n$ and following relations
$${x_i}^{e_i}={\prod _{k=i+1}^{n}}{{x_k}^{z_{i,k}}}\ \ \ \ \ ; \ \ \ \ \ 1\leq i\leq n,$$
$$[x_i,x_j]={\prod _{k=i+1}^{n}}{{x_k}^{y_{i,j,k}}}\ \ \ \ \ ; \ \ \ \ \ 1\leq j<i\leq n.$$
Note that, in such a presentation, we omit the trivial commutator relations.
\\
Similar \cite{4',12}, we introduce $l$ new generators $t_1,...,t_l$ (called tails) and we define ${{G}_\oslash^*}$ as the group generated by the generators $x_1,...,x_n,t_1,...,t_l$ and the following relations
$${x_i}^{e_i}={\prod _{k=i+1}^{n}}{{x_k}^{z_{i,k}}}.{t_{l(i)}}\ \ \ \ \ ; \ \ \ \ \ 1\leq i\leq n,$$
$$[x_i,x_j]={\prod _{k=i+1}^{n}}{{x_k}^{y_{i,j,k}}}.{t_{l(i,j)}}\ \ \ \ \ ; \ \ \ \ \ 1\leq j<i\leq n.$$
The next lemma states some facts about ${{G}_\oslash^*}$.

\begin{lemma}\cite[Lemma 1]{4'}\label{l4}
Let $G$ be defined by the consistent polycyclic presentation $F/R$ as above. Using the above notation we obtain the following.
\renewcommand {\labelenumi}{(\roman{enumi})} 
\begin{enumerate}
\item{${{G}_\oslash^*}\cong F/[R,F] \ \ , \ \ T\cong R/[R,F]  \ \ , \ \ {{{G}_\oslash^*}}/T\cong F/R\cong G$,}
\item{${{G}_\oslash^*}$ is defined by a polycyclic presentation.}
\end{enumerate}
Where $T:=<t_1,...,t_l>$.
\end{lemma}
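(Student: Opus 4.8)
The plan is to prove both assertions directly from the defining data, the key observation being that $R/[R,F]$ sits inside $F/[R,F]$ as a \emph{central} and \emph{abelian} subgroup. Indeed, for $\rho\in R$ and $f\in F$ one has $[\rho,f]\in[R,F]$, so the image of $R$ in $F/[R,F]$ is central; and since $[R,R]\subseteq[R,F]$, that image is abelian. Moreover, because conjugation becomes trivial modulo $[R,F]$, the normal closure $R$ and the subgroup generated by the relators coincide modulo $[R,F]$. Hence, writing $\rho_i=x_i^{e_i}\bigl(\prod_{k=i+1}^{n}x_k^{z_{i,k}}\bigr)^{-1}$ and $\sigma_{i,j}=[x_i,x_j]\bigl(\prod_{k=i+1}^{n}x_k^{y_{i,j,k}}\bigr)^{-1}$ for the power and commutator relators (so that $R$ is their normal closure in $F$), the group $R/[R,F]$ is the \emph{central} subgroup of $F/[R,F]$ generated by the classes $\overline{\rho_i}$ and $\overline{\sigma_{i,j}}$.

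Next I would build two mutually inverse homomorphisms. In one direction, since $F$ is free I define $\alpha\colon F\to {G}_\oslash^*$ by $x_i\mapsto x_i$; the modified relations of ${G}_\oslash^*$ give $\alpha(\rho_i)=t_{l(i)}$ and $\alpha(\sigma_{i,j})=t_{l(i,j)}$, so $\alpha(R)\subseteq T$. As the tails are central in ${G}_\oslash^*$, we get $\alpha([R,F])=1$, and $\alpha$ descends to $\bar\alpha\colon F/[R,F]\to {G}_\oslash^*$. In the other direction I define $\beta\colon {G}_\oslash^*\to F/[R,F]$ on generators by $x_i\mapsto x_i[R,F]$, $t_{l(i)}\mapsto \overline{\rho_i}$, and $t_{l(i,j)}\mapsto \overline{\sigma_{i,j}}$. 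The verification that $\beta$ respects the defining relations of ${G}_\oslash^*$ is exactly where centrality of $R/[R,F]$ is used: modulo $[R,F]$ the class $\overline{\rho_i}$ commutes with $\prod x_k^{z_{i,k}}$, so $\bigl(\prod x_k^{z_{i,k}}\bigr)\overline{\rho_i}=x_i^{e_i}[R,F]$, matching the image of $x_i^{e_i}=\prod x_k^{z_{i,k}}\,t_{l(i)}$, and similarly for the commutator relations and for the centrality of the tails. Comparing $\bar\alpha$ and $\beta$ on the generating sets shows $\beta\circ\bar\alpha$ and $\bar\alpha\circ\beta$ are the identity, giving the isomorphism $ {G}_\oslash^*\cong F/[R,F]$.

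From this isomorphism the remaining identifications in (i) follow formally. Under $\bar\alpha$ the subgroup $T=\langle t_1,\dots,t_l\rangle$ corresponds to the subgroup generated by the classes $\overline{\rho_i},\overline{\sigma_{i,j}}$, which by the opening observation is precisely $R/[R,F]$; hence $T\cong R/[R,F]$. Quotienting by $T$ and applying the third isomorphism theorem yields ${G}_\oslash^*/T\cong (F/[R,F])\big/(R/[R,F])\cong F/R\cong G$. For (ii), I would note that $T$ is central (so normal) and finitely generated abelian, hence polycyclic, while ${G}_\oslash^*/T\cong G$ is polycyclic; an extension of a polycyclic group by a polycyclic group is polycyclic, and the presentation exhibited—power and commutator relations expressing $x_i^{e_i}$ and $[x_i,x_j]$ in terms of later generators together with the (suppressed) relations making the $t_k$ central at the bottom of the generating sequence—is already of polycyclic form.

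The main obstacle is not conceptual but bookkeeping: one must check carefully that $\beta$ is well defined, i.e.\ that each modified relation of ${G}_\oslash^*$ maps to a valid identity in $F/[R,F]$. This reduces entirely to the commutativity of the relator classes with the word $\prod x_k^{z_{i,k}}$ (respectively $\prod x_k^{y_{i,j,k}}$) modulo $[R,F]$, which is guaranteed by centrality of $R/[R,F]$; consistency of the original presentation of $G$ is what ensures $F/R$ is genuinely $G$ and underlies the later exact-sequence computations, but it is not needed for the abstract isomorphism itself.
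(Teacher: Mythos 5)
The paper itself offers no proof of this lemma: it is quoted directly from Eick and Nickel \cite{4'}, so there is no internal argument to compare against. Your proof is correct, and it is essentially the standard argument behind Lemma 1 of \cite{4'}: the pair of mutually inverse homomorphisms $\bar\alpha$ and $\beta$, resting on the observation that $R/[R,F]$ is central in $F/[R,F]$ so that the normal closure of the relators coincides with the subgroup they generate modulo $[R,F]$, after which $T\cong R/[R,F]$ and ${G}_\oslash^*/T\cong F/R$ follow formally, and (ii) follows either from the exhibited presentation or from closure of polycyclicity under extensions.

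One point you handled well and that deserves emphasis: as transcribed in this paper, the presentation of ${G}_\oslash^*$ lists only the modified power and commutator relations and suppresses the relations making the tails central. Without those relations the lemma is actually false — each modified relation merely expresses $t_{l(i)}$ or $t_{l(i,j)}$ as a word in the $x_k$, so Tietze elimination of the tails would collapse the presented group to the free group $F$ rather than $F/[R,F]$. Your proof correctly treats centrality of the tails as part of the definition (as in \cite{4'}) and invokes it exactly where it is indispensable: in computing $\alpha(\rho_i)=t_{l(i)}$, in deducing $\alpha([R,F])=1$, and in verifying that $\beta$ respects the modified relations via $\bigl(\prod_k x_k^{z_{i,k}}\bigr)\overline{\rho_i}=x_i^{e_i}[R,F]$.
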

Therefore ${{G}_\oslash^*}$ is a central extension of $T$ by $G$, but the given relations may determine an inconsistent presentation for ${{G}_\oslash^*}$. Now, by using following consistency relations, we introduce consistency relations between the tails
$${x_k}({x_j}{x_i})=({x_k}{x_j}){x_i}\ \ \ \ \ ; \ \ \ \ \ k>j>i,$$
$$({x_j}^{e_j}){x_i}={{x_j}^{{e_j}-1}}({x_j}{x_i})\ \ \ \ \ ; \ \ \ \ \ j>i,$$
$${x_j}({x_i}^{e_i})=({x_j}{x_i}){{x_i}^{{e_i}-1}}\ \ \ \ \ ; \ \ \ \ \ j>i,$$
$$({x_i}^{e_i}){x_i}={x_i}({x_i}^{e_i})\ \ \ \ \ \ \ \ \ \textsl{for all i}.$$
In addition to the consistency relations, we want to evaluate the commutators $[x,y]$ in the extension ${{G}_\oslash^*}$ with the elements $x$, $y$ commuting in $G$, which themselves cause the creation of some new tail relations. In fact this step is the same as determining the subgroup ${\mathcal{M}_0}(G)\cong {<K(F)\cap R>}/{[R,F]}$ of the Schur multiplier $\mathcal{M}(G)\cong {(R\cap F')}/{<K(F)\cap R>}$, that was mentioned in the first two sections.
\\
Let ${{G}_\circ^*}$ be the group obtained by factoring ${{G}_\oslash^*}$ by these extra relations. Using the Gaussion elimination method, we produce a generating set for all relations between the tails and collect in the matrix $T$. Then we introduce a new basis for the tails $($say $t_l^*$$)$, by using Smith normal form $S=PTQ$ of $T$ that will be obtained by two invertible matrices $Q$ and $P$. The abelian invariants of the group generated by the tails are identified as the elementary divisors of $T$. Finally, the Bogomolov multiplier of $G$ is recognized as the torsion subgroup of $<t_l^* \ | \ 1\leq l\leq m>$ inward ${G}_\circ^*$, $($for more information you can see \cite{4',12}$)$.
\\
\\ 
Note that the theoretical history of this method being the following proposition which Jezernik and Moravec have proved it in their article.
\begin{proposition}\cite[Proposition 2.1]{12}\label{p3.4'}
Let $G$ be a finite group presented by $G =
F/R$ with $F$ free of rank $n$. Denote by $K(F)$ the set of commutators in $F$. Then $\tilde{B_0}(G)$ is isomorphic to the torsion subgroup of $R/(K(F) \cap R)$ and the torsion free factor $R/([F, F] \cap R)$ is free abelian of rank $n$. Moreover, every complement $C$ to $\tilde{B_0}(G)$ in $R/(K(F) \cap R)$ yields a commutativity preserving central extension of $\tilde{B_0}(G)$ by $G$. 
\end{proposition}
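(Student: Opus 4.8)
The plan is to turn the statement into a Hopf-type formula for $\tilde{B_0}(G)$ read off from the presentation $G=F/R$, and then to extract the torsion/torsion-free decomposition and the central extension from one short exact sequence of abelian groups. Write $N:=\langle K(F)\cap R\rangle$. First I would record the nested normal subgroups $[F,R]\leq N\leq R\cap[F,F]\leq R$: each basic commutator $[f,r]$ with $f\in F$, $r\in R$ is simultaneously a commutator and, by normality of $R$, an element of $R$, so $[F,R]\leq N$, while every commutator lies in $[F,F]$, giving $N\leq R\cap[F,F]$. Conjugating a commutator lying in $R$ again yields a commutator lying in $R$, so $N\trianglelefteq F$; and since $[F,R]\leq N$, the quotient $R/N$ is central in $F/N$. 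In particular $R/N$ is abelian (indeed $[R,R]\leq[F,R]\leq N$) and every subgroup of $R/N$ is normal in $F/N$.

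Next comes the main isomorphism. Using $\tilde{B_0}(G)=\mathcal{M}(G)/\mathcal{M}_0(G)$ together with the Hopf formula $\mathcal{M}(G)\cong(R\cap[F,F])/[F,R]$ and Moravec's identification $\mathcal{M}_0(G)\cong N/[F,R]$ (equivalently, realizing $G\wedge G\cong[F,F]/[F,R]$, under which $\mathcal{M}(G)$ is the kernel of the commutator map and $\mathcal{M}_0(G)$ is generated by the classes of commutators $[\tilde x,\tilde y]$ that fall in $R$), the third isomorphism theorem yields
\[\tilde{B_0}(G)\;\cong\;\frac{R\cap[F,F]}{N}.\]
Since $G$ is finite this group is finite, hence a torsion group.

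Now I would assemble the short exact sequence. Because $G$ is finite, $G/[G,G]\cong F/R[F,F]$ is finite, so $R[F,F]/[F,F]$ has finite index in the free abelian group $F/[F,F]\cong\mathbb{Z}^{n}$; hence $R/(R\cap[F,F])\cong R[F,F]/[F,F]$ is free abelian of rank $n$, which is the torsion-free factor in the statement. Combining this with the previous step gives
\[1\longrightarrow \tilde{B_0}(G)\longrightarrow R/N\longrightarrow R/(R\cap[F,F])\longrightarrow 1,\]
a sequence of abelian groups with finite kernel and free abelian quotient of rank $n$. It therefore splits; the torsion subgroup of $R/N$ is exactly $(R\cap[F,F])/N\cong\tilde{B_0}(G)$, and any complement $C$ is isomorphic to the quotient, hence free abelian of rank $n$.

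Finally, for the commutativity-preserving central extension, let $C$ be a complement and let $\tilde C\leq R$ be its preimage, so $N\leq\tilde C$, $\tilde C/N=C$, and $\tilde C\trianglelefteq F$ because $C\trianglelefteq F/N$ (as $R/N$ is central). Then $R/\tilde C\cong(R/N)/C\cong\tilde{B_0}(G)$, and since $[F,R]\leq N\leq\tilde C$ the extension
\[1\longrightarrow \tilde{B_0}(G)\longrightarrow F/\tilde C\longrightarrow G\longrightarrow 1\]
is central. It preserves commutativity: if $x,y\in G$ commute and $\tilde x,\tilde y\in F$ are lifts, then $[\tilde x,\tilde y]$ is a commutator lying in $R$, so $[\tilde x,\tilde y]\in K(F)\cap R\subseteq N\subseteq\tilde C$ and the lifts commute in $F/\tilde C$. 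The step I expect to be the real obstacle is the identification $\mathcal{M}_0(G)\cong N/[F,R]$, i.e.\ checking that under $G\wedge G\cong[F,F]/[F,R]$ the subgroup generated by the $x\wedge y$ with $[x,y]=1$ corresponds precisely to $\langle K(F)\cap R\rangle/[F,R]$; everything after that is a formal splitting and lifting argument.
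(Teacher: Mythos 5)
This proposition is imported by the paper from Jezernik--Moravec \cite[Proposition 2.1]{12} and is stated without proof, so there is no internal argument to compare yours against; judged on its own terms, your proposal is correct, and it is in substance the original argument from \cite{12}. The chain $[F,R]\leq \langle K(F)\cap R\rangle \leq R\cap[F,F]\leq R$, the identification $\tilde{B_0}(G)\cong (R\cap[F,F])/\langle K(F)\cap R\rangle$, the fact that $R/(R\cap[F,F])\cong R[F,F]/[F,F]$ is a finite-index subgroup of $F/[F,F]\cong\mathbb{Z}^{n}$ and hence free abelian of rank $n$, the splitting of the resulting exact sequence of abelian groups, and the passage from a complement $C$ to the central extension $F/\tilde{C}$ with commuting lifts of commuting pairs are exactly the ingredients of that proof. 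The one step you single out as the real obstacle --- that under $G\wedge G\cong [F,F]/[F,R]$ the subgroup $\mathcal{M}_0(G)$ corresponds to $\langle K(F)\cap R\rangle/[F,R]$, so that $\tilde{B_0}(G)=\mathcal{M}(G)/\mathcal{M}_0(G)$ becomes $(R\cap[F,F])/\langle K(F)\cap R\rangle$ --- is precisely Moravec's Hopf-type formula for the Bogomolov multiplier, proved in \cite{22}; your sketch (commuting pairs in $G$ correspond exactly to commutators of $F$ falling in $R$, and these generate the two subgroups being compared) is the right argument, but in practice one cites \cite{22} for it rather than reproving it. The remaining verifications --- finiteness of $\tilde{B_0}(G)$ via finiteness of $\mathcal{M}(G)$, normality of $\tilde{C}$ in $F$ from centrality of $R/\langle K(F)\cap R\rangle$ in $F/\langle K(F)\cap R\rangle$, and the commutativity-preserving property of $F/\tilde{C}$ --- are all handled correctly.
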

Now by using this method, we show that following groups have nontrivial Bogomolov multiplier. 
\begin{proposition}\label{p3.4}
All groups, except the groups of the Propositions \ref{p3.1}, \ref{p3.2} and \ref{p3.3}, have non trivial Bogomolov multiplier.

\end{proposition}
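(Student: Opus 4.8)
The plan is to handle the remaining groups as the complementary case to Propositions \ref{p3.1}--\ref{p3.3}: for every group $G$ in Wilkinson's list not covered there, I would compute $\tilde{B_0}(G)$ directly by the tails method of Eick--Nickel and Jezernik--Moravec recalled above and exhibit nontrivial torsion. The theoretical backbone is Proposition \ref{p3.4'}, which identifies $\tilde{B_0}(G)$ with the torsion subgroup of $R/(K(F)\cap R)$; thus it suffices, group by group, to produce a genuine $p$-torsion element in this quotient. Since $\tilde{B_0}$ is an isoclinism invariant, I would first partition the remaining groups into isoclinism families and carry out the computation once per family, which substantially reduces the bookkeeping.

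Concretely, for each representative $G = F/R$ given by Wilkinson's consistent polycyclic presentation, I would form the tail extension $G_\oslash^* \cong F/[R,F]$ by adjoining central generators $t_1,\dots,t_l$ to the power and commutator relations as in Lemma \ref{l4}. I would then pass to $G_\circ^*$ by imposing the two families of tail relations: the four consistency relations (associativity of the collection process together with compatibility of the power relations), and the commuting-pair relations arising from every pair $x,y$ with $[x,y]=1$ in $G$, the latter being exactly what cuts out $\mathcal{M}_0^*(G)$. Each such relation is linear in the tails, so assembling their exponent vectors yields an integer matrix $T$ whose Smith normal form $S = PTQ$ displays the abelian invariants of the tail group; its torsion part is $\tilde{B_0}(G)$.

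With this machinery in place the remainder is a finite verification: for each representative I would read off the elementary divisors of $T$ and check that at least one nontrivial invariant occurs, necessarily a power of $p$ since $G$ has exponent $p$, so that $\tilde{B_0}(G)\neq 0$. Because the list is long, the natural way to execute this is to implement the algorithm and run it uniformly, using the same presentations throughout; the output is then a single table of elementary divisors, one per isoclinism family, from which the conclusion is immediate.

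The step I expect to be the main obstacle is the complete and correct enumeration of the commuting-pair relations that generate $\mathcal{M}_0^*(G)$. These must be collected with the greatest care, since omitting one would spuriously shrink $\mathcal{M}_0^*(G)$ and could falsely preserve torsion, whereas an extraneous one would wrongly annihilate it. A secondary subtlety is the interaction of the exponent-$p$ power relations with the expansion of Lemma \ref{l2.4}: one must confirm that the relevant binomial coefficients $\binom{p}{k}$ vanish modulo $p$, so that the torsion detected is genuinely of order $p$ and not an artifact of the collection process, and that the consistency relations do not combine with the power relations to trivialize a class that ought to survive.
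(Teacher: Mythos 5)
Your proposal takes essentially the same approach as the paper: its proof of this proposition is precisely the Eick--Nickel/Jezernik--Moravec tails computation you describe (carried out in detail only for $G_9$, where the consistency relations are assembled into the matrix $T$, the Smith normal form $S=PTQ$ yields the elementary divisors $1,1,1,p$, and hence $\tilde{B_0}(G_9)\cong{\Bbb{Z}}_p$ generated by $[c,b][d,a]^{-1}$), with all remaining groups handled ``in a similar way.'' Your extra step of reducing to one computation per isoclinism family is a sound but minor organizational refinement, not a different method.
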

\begin{proof}
We state the proof in detail for $G_9$, the remaining cases can be proved in a similar way. The group $G_9$ has a following polycyclic presention
$$G_{9}= <a,b,c,d,e,f,g \ | \  [b,a]=c , [c,a]=d , [c,b]=[d,a]=e>.$$

We add $11$ tails to the presentation to make a quotient of the universal central extension of the system: 

$$a^p=t_1, b^p=t_2, c^p=t_3, d^p=t_4, e^p=t_5, f^p=t_6, g^p=t_7,$$
$$[b,a]=ct_8, [c,a]=dt_9, [c,b]=et_{10}, [d,a]=et_{11}.$$

So we have a new group that generated by $a,b,c,d,e,f,g$ and $t_i$, where $1\leq i \leq 11$. On the other hand, by using consistency relations, we have following relations between the tails:
$${{t_8}^p}{t_3}=1 \ \ , \ \ {{t_9}^p}{t_4}=1 \ \ , \ \ {{t_{10}}^p}{t_5}=1 \ \ , \ \ {{t_{11}}^p}{t_5}=1.$$

Now we collect all the coefficients of these relationships in a matrix and using elementary row (column) operations, we turn it into the upper triangular matrix $T$.

\begin{center}

T=\ \begin{blockarray}{ccccccccccc}
\matindex{$t _{1} $} & \matindex{$t _{2} $} & \matindex{$ t _{3} $} &\matindex{$ t _{4} $} &\matindex{$ t _{5} $} &\matindex{$ t _{6} $} &\matindex{$ t _{7} $} &\matindex{$ t _{8} $} &\matindex{$ t _{9} $} &\matindex{$ t _{10} $} &\matindex{$ t _{11} $} &\\
\begin{block}{(ccccccccccc)}
0&0&1&0&0&0&0&p&0&0&0\\
0&0&0&1&0&0&0&0&p&0&0\\
0&0&0&0&1&0&0&0&0&p&0\\
0&0&0&0&1&0&0&0&0&0&p\\
\end{block}
 \end{blockarray}
 
\end{center}
 
 Then we change basis for the tails (say $t_l^*$), by using Smith normal form $S=PTQ$ which is obtained by the  following two invertible matrices $P$ and $Q$.
 
\begin{center}
P=\ \begin{blockarray}{cccc}
\begin{block}{(cccc)}
1&0&0&0\\
0&1&0&0\\
0&0&1&0\\
0&0&-1&1\\
\end{block}
 \end{blockarray}
 \end{center}
 
 \begin{center}
Q=\ \begin{blockarray}{ccccccccccc}
\begin{block}{(ccccccccccc)}
1&0&0&0&0&0&0&0&0&0&0\\
0&1&0&0&0&0&0&0&0&0&0\\
0&0&1&0&0&0&0&-p&0&0&0\\
0&0&0&1&0&0&0&0&-p&0&0\\
0&0&0&0&1&0&0&0&0&-p&0\\
0&0&0&0&0&1&0&0&0&0&0\\
0&0&0&0&0&0&1&0&0&0&0\\
0&0&0&0&0&0&0&1&0&0&0\\
0&0&0&0&0&0&0&0&1&0&0\\
0&0&0&0&0&0&0&0&0&1&0\\
0&0&0&0&0&0&0&0&0&1&1\\
\end{block}
 \end{blockarray}
 \end{center}

\begin{center}
S=\ \begin{blockarray}{ccccccccccc}
\matindex{$t _{1}^* $} & \matindex{$t _{2}^* $} & \matindex{$ t _{3}^* $} &\matindex{$ t _{4}^* $} &\matindex{$ t _{5}^* $} &\matindex{$ t _{6}^* $} &\matindex{$ t _{7}^* $} &\matindex{$ t _{8}^* $} &\matindex{$ t _{9}^* $} &\matindex{$ t _{10}^* $} &\matindex{$ t _{11}^* $} &\\
\begin{block}{(ccccccccccc)}
0&0&1&0&0&0&0&0&0&0&0\\
0&0&0&1&0&0&0&0&0&0&0\\
0&0&0&0&1&0&0&0&0&0&0\\
0&0&0&0&0&0&0&0&0&0&p\\
\end{block}
 \end{blockarray}
 .
 \end{center}
 
Therefore the non trivial elementary divisors of the Smith
normal form of $T$ are $1, 1, 1, p$. The element $t_{11}^*$ is corresponding to the divisor that is greater than $1$. Thus, $\tilde{B_0}(G)\cong <t_{11}^* \ | \ ({t_{11}^*})^p=1>$.
We know the Bogomolov multiplier has only nonuniversal commutator relations as generators. So, we must discard the new tails $t_{i}^*$ that their corresponding elementary divisors are trivial or $1$. After converting the situation back to the principal tails $t_i$, we have $t_{11}^*=t_{11}$ and the other tails $t_i$ are trivial. Thus we have a commutativity preserving central extension (CP extension) of the tails subgroup by $G_9$, that has following presentation
$$<a,b,c,d,e,f,g,{t_{11}^*} \ | \ a^p=b^p=c^p=d^p=e^p=f^p=g^p=(t_{11}^*)^p=1 \ , \ $$
\[ [b,a]=c , [c,a]=d , [c,b]=e , [d,a]=e{t_{11}^*}>.\]
On the other hand its derived subgroup is isomorphic to ${G_9}\curlywedge {G_9}$ (see \cite{12}). Also, we know $t_{11}^*=[c,b][d,a]^{-1}$. Finally, we have 
\[\tilde{B_0}(G_9)\cong <[c,b][d,a]^{-1} \ | \ ([c,b][d,a]^{-1})^p=1>\cong {\Bbb{Z}}_p.\]

\end{proof}

\section{\bf{The Bogomolov multiplier of groups of order $3^7$ and exponent $3$}}
According to the Wilkinson's classification in \cite{31}, all groups of exponent $3$ and order $3^7$ are given by the $16$ groups in the list with the following numbers
$$\{1, 2, 4, 6, 10, 11, 12, 13, 35, 36, 37, 38, 39, 43, 44, 110\}.$$
The $17$th such group is the Burnside group on three generators. with the commutator relations
$$G_{17}={[b,a]=d , [c,a]=e , [c,b]=f , [d,c]=g , [b,e]=g , [f,a]=g}.$$
Similar to the previous content, $\tilde{B_0}(G_i)=0$, where $i\neq 38$. Also, we can show that, $G$ have trivial Bogomolov multiplier. 
\section{\bf{The Bogomolov multiplier of groups of order $5^7$ and exponent $5$}}
Here also according to the Wilkinson's classification \cite{31}, all groups of exponent $5$ and order $5^7$ are given by the following $153$ groups in a list containing the following numbers 
\\
$\{1, ... , 29, 35, ... , 82, 84, ... , 102, 110, ... , 120, 126-n (n=0,...,4), 130, ... , 147,$
\\
$148-n (n=2,3,4), 149, 150, ... , 152, 153-n (n=0,...,4), 154, 155-n (n= 2, 3, 4),$
\\
$156, 157, 158-n (n= 1, 2), 159, 160, 161-kn (k= 1,n = 4)\}.$
\\
\\
Therefore $\tilde{B_0}(G_i)=0$, where\\
$i\in \{1,2,3,4,5,6,7,8,10,11,12,15,16,17,20,21,22,24,25,26,28,35,36,37,39,40,$
\\
$41,43,45,46,47,48,49,50,52,54,55,57,59,60,62,64,70,71,73,74,75,76,81,85,$
\\
$89,91,95,100,111,112,114,117,118,131,132,133,140,141,142,143,144,149,152\}$.

\end{document}